\theoremstyle{plain}
\newcommand{\bF}{{\mathbb F}}
\newcommand{\bN}{{\mathbb N}}
\newcommand{\bQ}{{\mathbb Q}}
\newcommand{\bZ}{{\mathbb Z}}
\newcommand{\cG}{{\mathscr G}}
\newcommand{\cL}{{\mathscr L}}
\newcommand{\caO}{{\mathcal O}}
\newcommand{\caU}{{\mathcal U}}
\newcommand{\fp}{{\mathfrak p}}
\newcommand{\fq}{{\mathfrak q}}
\DeclareSymbolFont{cyrletters}{OT2}{wncyr}{m}{n}
\DeclareMathSymbol{\Sha}{\mathalpha}{cyrletters}{"58}
\DeclareMathOperator{\Isom}{Isom}
\DeclareMathOperator{\Aut}{Aut}
\DeclareMathOperator{\Spec}{Spec}
\DeclareMathOperator{\Frob}{Frob}
\DeclareMathOperator{\Gal}{G}
\newcommand{\nr}{{\rm nr}}
\newcommand{\Syl}{{\rm Syl}}
\newtheorem*{rep@theorem}{\rep@title}
\newcommand{\newreptheorem}[2]{%
\newenvironment{rep#1}[1]{%
 \def\rep@title{#2 \ref{##1}}%
 \begin{rep@theorem}}%
 {\end{rep@theorem}}}
\newtheorem{thm}{Theorem}[section]
\newtheorem{prop}[thm]{Proposition}
\newtheorem{cor}[thm]{Corollary}
\newtheorem{lm}[thm]{Lemma}
\theoremstyle{definition}
\newtheorem{Def}[thm]{Definition}
\newtheorem{rem}[thm]{Remark}
\newtheorem{rems}[thm]{Remarks}
\newenvironment{pro*}[1][Proof]{{\it{#1:}} }{}
\newcommand\rar{ \rightarrow }
\newcommand\tar{ \twoheadrightarrow }
\newcommand\har{ \hookrightarrow }
\newcommand\cs{\mathop{ \rm cs}}
\newcommand\dirlim{\mathop{\underrightarrow{\lim} }}
\newcommand\prolim{\mathop{\underleftarrow{\lim} }}
\newcommand{\sm}{{\,\smallsetminus\,}}
\newcommand\N{\rm N}
\newcommand\coh{\rm H}
\newcommand\rank{\rm rk}
\newcommand\bap{\bar{\fp}}
\newcommand\baq{\bar{\fq}}
\newcommand\subsetsim{\stackrel{\subset}{\sim}}
\newcounter{absatzcounter}[section]
\numberwithin{equation}{section}
\title{On a generalization of the Neukirch-Uchida theorem}
\author{A. Ivanov\thanks{ivanov@ma.tum.de} \thanks{The author was supported by the Mathematical Center Heidelberg and by the Technische Universit\"at M\"unchen}}
\begin{document}

\maketitle

\begin{abstract}
In this paper we generalize a part of Neukirch-Uchida theorem for number fields from the birational case to the case of curves $\Spec \caO_{K,S}$ with $S$ a stable set of primes of a number field $K$. In particular, such sets can have arbitrarily small (positive) Dirichlet density. The proof consists of two parts: first one establishes a local correspondence at the boundary $S$, which works as in the original proof of Neukirch. But then, in contrast to Neukirchs proof, a direct conclusion via Chebotarev density theorem is not possible, since stable sets are in general too small, and one has to use further arguments.

% \textbf{2010 Mathematical Subject Classification:} 11R34, 11R37, 14G32.
\end{abstract}

\section{Introduction}

Our goal is to generalize a part of the Neukirch-Uchida theorem (\cite{Ne} Theorem 2) for number fields to schemes of the form $\Spec \caO_{K,S}$ where $K$ is a number field and $S$ a stable set of primes. Stable sets were introduced in \cite{IvStableSets}, they have positive but arbitrarily small Dirichlet density and behave in many (but not all) aspects like sets of primes of density one. In particular, many Chebotarev sets are stable. Thus the case considered in this paper is somewhere between the birational case considering $\Spec K$ and the arithmetic case considering $\Spec \caO_{K,S}$ with $S$ a finite set of primes. Clearly, the arithmetic case is much harder. The same anabelian question for function fields in one variable over a finite field, was answered by Tamagawa \cite{Ta} in the case of a finite set $S$. However, the number field analogue of his proof seems to be out of scope at the moment.

To state our main result, we recall briefly the definition of stability. Let $\lambda > 1$. Roughly speaking, a set $S$ of primes of $K$ is $\lambda$-stable, if there is a subset $S_0 \subseteq S$ and some $0 < a \leq 1$, such that the Dirichlet density of $S_0(L)$ for almost all finite subextensions of $K_S/K$ lies in the interval $[a,\lambda a)$. The condition $(\dagger)_p$ in the theorem below should be understood as 'stable and good for $p$'. It is not very restrictive.  In Section \ref{sec:stable_sets_revsec} we recall all necessary definitions and results about stable sets which we will use. 

\begin{thm}\label{thm:NU_anab_without_dec_for_stable}
For $i=1,2$, let $K_i$ be a number field and $S_i$ a set of primes of $K_i$, such that 
\begin{itemize}
\item[(a)] $K_1$ is totally imaginary and Galois over $\bQ$,
\item[(b)] for $i = 1,2$, the set $S_i$ is $2$-stable and satisfies $(\dagger)_p$ for almost all $p$,
\item[(c)] there are two odd rational primes under $S_1$,
\item[(d)] there is an odd rational prime $p$ with $S_p \subseteq S_2$ and $S_i$ satisfies $(\dagger)_p$ for $i \in \{1,2\}$.
\end{itemize}
\noindent If $\Gal_{K_1, S_1} \cong \Gal_{K_2,S_2}$, then $K_1 \cong K_2$.
\end{thm}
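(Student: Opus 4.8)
\begin{pro*}[Sketch of proof]
The plan is to run, for the curve $\Spec\caO_{K,S}$, the Neukirch-Uchida strategy — first the local theory at the boundary, then the reconstruction of the field — replacing the Chebotarev density argument (unavailable here, a stable set being in general far too sparse) by the uniform density estimates for stable sets recalled in Section~\ref{sec:stable_sets_revsec}. Fix an isomorphism $\sigma\colon\Gal_{K_1,S_1}\xrightarrow{\,\sim\,}\Gal_{K_2,S_2}$. \textbf{Step 1} is the \emph{local correspondence at the boundary}: one shows that $\sigma$ carries the decomposition groups of the primes of $S_1$ onto those of the primes of $S_2$, at least for primes over odd rational primes. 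By stability and goodness (see Section~\ref{sec:stable_sets_revsec} and \cite{IvStableSets}), for $\fp\in S_i$ over an odd $p$ the decomposition group $D_\fp\subseteq\Gal_{K_i,S_i}$ is a full local Galois group $\Gal_{(K_i)_\fp}$; one then characterises such subgroups purely group-theoretically, exactly as in \cite{Ne} — through their cohomological dimension, the structure of their second cohomology with finite coefficients (local duality), the shape of their maximal pro-$p$ quotient, and a maximality clause — so that $\sigma$ and $\sigma^{-1}$ are forced to send them to subgroups of the same type. This produces a bijection $\fp\mapsto\fp'$ between the primes of $S_1$ and of $S_2$ over odd rational primes with $\sigma(D_\fp)=D_{\fp'}$. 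Hypotheses~(a) and~(c) enter here and below: total imaginarity eliminates the archimedean primes, while the two odd rational primes under $S_1$ furnish the independent local data needed for the rigidity argument of Step~3.

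\textbf{Step 2} is local class field theory on each matched pair. The restriction $\sigma\colon D_\fp\xrightarrow{\,\sim\,}D_{\fp'}$ is an isomorphism of local absolute Galois groups, so it respects the inertia subgroup and the Frobenius coset and induces an isomorphism of the profinitely completed local multiplicative groups compatible with the valuations; in particular the residue characteristics agree and the residue fields have the same cardinality. Hypothesis~(d) — an odd $p$ with $S_p\subseteq S_2$, and $(\dagger)_p$ on both sides — is used to sharpen this at $p$: with the whole fibre of $\Spec\caO_{K_2}$ over $p$ visible in $S_2$, the cyclotomic $\bZ_p$-extension lies in $K_{2,S_2}$, the $p$-cyclotomic character of $\Gal_{K_2,S_2}$ becomes group-theoretic — read off the Frobenii at the primes $\fq\nmid p$ of $S_2$ via the norms recovered above — and $\sigma$ is normalised accordingly; on the $K_1$-side the Galois hypothesis~(a) plays the balancing role.

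\textbf{Step 3}, the reconstruction of the fields (the Uchida part for the open curve, without Chebotarev), is the crux. Over $\Spec\caO_{K,S}$ one has the class-field-theoretic fundamental exact sequence $0\to\Br(\caO_{K,S})\to\bigoplus_{\fp\in S}\Br(K_\fp)\xrightarrow{\,\sum\inv\,}\bQ/\bZ\to 0$, and $\Gal_{K,S}^{\ab}$ is the $S$-idele-class quotient attached to $\caO_{K,S}$ — all of this encoded in $\Gal_{K,S}$ together with the local data of Steps~1--2. Feeding in the local matching and the $\sum\inv$ relation, one transfers along $\sigma$ — via Kummer theory inside $K_S$ — to a $\sigma$-equivariant isomorphism of the relevant $S$-arithmetic multiplicative modules of $K_1$ and $K_2$. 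What is \emph{not} formal is the descent from this $S$-arithmetic datum to the genuine multiplicative group $K^\times$: the obstruction is controlled by the $S$-units and the $S$-ideal class group, and it is precisely this gap that Chebotarev closes in the classical proof. The main obstacle is therefore to show that the uniform density property of a $2$-stable set — the Dirichlet density of $S_i(L)$ remaining in an interval $[a,2a)$ for almost all $L\subseteq K_{S_i}$ — rigidifies the situation enough to pin down $K_i^\times$ inside the $S_i$-arithmetic data: working up the tower $K_S/K$, a would-be discrepancy would impose a family of splitting and principality conditions of a density incompatible with $\lambda$-stability exactly in the range $\lambda<2$, which is why the constant $2$ is essential and where hypothesis~(c) contributes the two independent local constraints forcing uniqueness. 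Once $K_i^\times$ is recovered together with the family of valuations coming from the groups $D_\fp$, Uchida's reconstruction of a global field from its multiplicative group equipped with those valuations yields $K_1\cong K_2$.
\end{pro*}
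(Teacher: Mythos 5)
Your Steps 1--2 rest on a premise that is not available here: that for $\fp \in S_i$ above an odd $p$ the decomposition group $D_{\fp} \subseteq \Gal_{K_i,S_i}$ is the full local absolute Galois group, so that it can be characterized as in Neukirch's birational argument and then fed into local class field theory (inertia subgroups, Frobenius cosets, norm groups, a normalized cyclotomic character). Stability together with $(\dagger)_p$ only yields $K_{S,\fp} \supseteq K_{\fp}(p)$, the maximal pro-$p$ extension of the local field (Theorem \ref{thm:stable_MainThm_part_A}); whether $D_{\fp}$ is the full local group is unknown, and the actual argument deliberately avoids this. What can be characterized group-theoretically are only the subgroups of $p$-decomposition type (non-abelian Demushkin of rank $2$), i.e.\ $p$-Sylow subgroups of the $D_{\bap}$, via the vanishing in the limit of $\Sha^2$ for stable sets (Proposition \ref{prop:Sha2vanishingwoinverting}) and Lemmas \ref{lm:dec_intersec_with_GWT}, \ref{lm:oxujet_dostali_suki}; from these one recovers the residue characteristic and the absolute degree of almost all primes of $S_i$ (Lemma \ref{lm:someanablocalinform}), but neither inertia nor Frobenius, so the normalization you describe in Step 2 cannot be carried out as stated.

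The more serious gap is Step 3, which is the crux and is not an argument but a restatement of the problem: the claim that $2$-stability ``rigidifies the situation enough to pin down $K_i^{\times}$'' and that a discrepancy would force ``splitting and principality conditions of a density incompatible with $\lambda$-stability'' is precisely what would have to be proved, and no mechanism is given; moreover you aim at an Uchida-type reconstruction of the multiplicative group, which is more than the statement $K_1 \cong K_2$ requires and is out of reach with the local data actually available. The proof instead proceeds as follows, and none of its three ingredients appears in your sketch: (i) the local correspondence plus hypothesis (d) ($S_p \subseteq S_2$ with $p$ odd and $(\dagger)_p$ on both sides) gives $[K_2:\bQ] \le [K_1:\bQ]$ by summing local degrees over $p$; (ii) for each of the two odd primes $p$ under $S_1$ from (c), the field $M_1 = K_{1,S_p}(p)$ is normal over $\bQ$ (this is where the Galois hypothesis in (a) enters) and $\Gal_{M_1/K_1}$ is torsion-free; the uniform bound $[M_1 : M_1 \cap M_2] < N$ of Proposition \ref{lm:uniform_bound} --- proved by transporting $S_1' = S_1(M_1)\cap \cs(M_1/\bQ)$ along $\sigma_{\ast}$, which preserves residue characteristics and degrees, and comparing its density with $[M_1M_2:M_2]^{-1} = \delta_{M_2}(\cs(M_1M_2/M_2))$ --- then forces $M_1 = K_1\cdot(M_1 \cap L_{2,p})$, i.e.\ $K_{1,S_p}(p)/K_1$ is a base change from $K_1 \cap L_{2,p}$; (iii) Proposition \ref{prop:nonex_of_lifts}(a), a count of independent $\bZ_p$-extensions ($r_2(K_1)+1$ versus $[K_0:\bQ]$, using total imaginarity from (a)), excludes any such nontrivial descent, so $K_1 \subseteq L_{2,p}$ for $p = p_1,p_2$, whence $K_1 \subseteq L_{2,p_1}\cap L_{2,p_2} = K_2$, and (i) gives equality. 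If you want to salvage your outline, Steps 1--2 must be rewritten in terms of $p$-Sylow subgroups and the residue-characteristic/degree data only, and Step 3 must be replaced by an actual argument of this density-plus-descent type rather than an appeal to multiplicative-group reconstruction.
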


The important assumptions are that $K_1/\bQ$ is Galois and that $S_1,S_2$ are stable. All other assumptions are of technical nature.

The first step towards a proof of this theorem is the ``local correspondence at the boundary'', which is very similar to the birational case. Roughly speaking, it is a bijection deduced out of $\sigma$, between primes of $S_1$ and $S_2$, which respects the residue characteristic and the absolute degree of primes. It is interesting that (in contrast to the original proof of Neukirch) we neither need to know that the decomposition groups of primes in $S_i$ are the full local Galois groups, nor that there is a rational prime $p$ invertible on $\Spec \caO_{K,S}$. This last phenomenon can indeed happen for 'nice' stable sets $S$ (but not for sets with density one; cf. \cite{IvStableSets} Section 3.4).

Now the striking point is the following. In the birational case, the corresponding result of Neukirch follows directly from an established local correspondence at the boundary and an easy application of Chebotarev density theorem. The same would also work in the case of restricted ramification if $\delta_{K_i}(S_i) = 1$. But for general stable sets an analogous application of Chebotarev is impossible, since stable sets can have arbitrarily small (positive) density. Naively, this can be illustrated by the following easy consideration: the Chebotarev set $P_{M/K}(\sigma)$ do not determine the field $M$ uniquely, i.e., there are different finite Galois extensions $M,N/K$ and elements $\sigma, \tau$ in the corresponding Galois groups with $P_{M/K}(\sigma) = P_{N/K}(\tau)$. Thus beside the local correspondence one needs some further arguments to prove the theorem. Those are given in Proposition \ref{sec:uniform_bound} and Proposition \ref{prop:nonex_of_lifts}. The first involves stability property once again and the second shows that certain Galois extensions of number fields do not come by base change from smaller Galois extensions. That are these extra arguments, which require the additional technical assumptions in the theorem. 

% Finally we remark that all stable sets which are known to the author are $\lambda$-stable for all $\lambda > 1$ (hence, in particular, are $(\dagger)_2$) and satisfy $(\dagger)_p$ for almost all primes $p$. Hence at least for such 'nice' stable sets the condition (b) in Theorem \ref{thm:NU_anab_without_dec_for_stable} reduces to '$S$ stable'

\subsection*{Notation}

In this paper we use the same notations as in \cite{IvLocCorBound}. In particular, for a pro-finite group $G$ we denote by $G(p)$ its maximal pro-$p$ quotient and by $G_p$ a $p$-Sylow subgroup. For a subgroup $H \subseteq G$, we denote by $\N_G(H)$ its normalizer in $G$. 

For a Galois extension $M/L$ of fields, $\Gal_{M/L}$ denotes its Galois group. By $K$ we always denote an algebraic number field, that is a finite extension of $\bQ$. If $L/K$ is a Galois extension and $\bap$ is a prime of $L$, then $D_{\bap, L/K} \subseteq \Gal_{L/K}$ denotes the decomposition subgroup of $\bap$. If $\fp := \bap|_K$ is the restriction of $\bap$ to $K$, then we sometimes allow us to write $D_{\bap}$ or $D_{\fp}$ instead of $D_{\bap,L/K}$, if no ambiguity can occur. We write $\Sigma_K$ for the set of all primes of $K$ and $S,T$ will usually denote subsets of $\Sigma_K$. If $L/K$ is an extension and $S$ a set of primes of $K$, then we denote the pull-back of $S$ to $L$ by $S_L$, $S(L)$ or $S$ (if no ambiguity can occur). We write $K_S/K$ for the maximal extension of $K$, which is unramified outside $S$ and $\Gal_S := \Gal_{K,S}$ for its Galois group. Further, for $p \leq \infty$ a (archimedean or non-archimedean) prime of $\bQ$, $S_p = S_p(K)$ denotes the set of all primes of $K$ lying over $p$ and $S_f := S \sm S_{\infty}$.

\begin{comment}
For $i=1,2$, let $K_i$ be a number field with an algebraic closure $\overline{K_i}$ and absolute Galois group $\Gal_{K_i}$. The Neukirch-Uchida theorem proven in \cite{Ne} and \cite{Uc} states that the natural map 
\[ \Isom(K_2,K_1) \longrightarrow \Isom(\Gal_{K_1},\Gal_{K_2})/\Gal_{K_2} \]
\noindent is a bijection. This was the first result in the anabelian program suggested by Grothendieck, which was established even before the program was formulated by Grothendieck in a brief to Faltings in 1986. Since there many results of similar kind were shown. First of all, the result was extended to the function field case by Uchida in \cite{Uc}. Then Pop has proven a vast generalization of to the case of all fields, which are finitely 

The condition that $S$ is $2$-stable is almost the same as the condition that $S$ is stable: indeed, formally, stable is a bit weaker, but the author could not construct any example of a stable set which is not $\lambda$-stable for some $\lambda > 1$.
\end{comment}

%************************************************************************************************************************************************************
%************************************************************************************************************************************************************

\subsection*{An outline of the paper}
After recalling necessary definitions and facts about stable sets in Section \ref{sec:stable_sets_revsec}, we will in Section \ref{sec:Loccor} establish the local correspondence at the boundary for a given isomorphism of two Galois groups of the form $\Gal_{K,S}$ with $S$ stable. This is the first step towards a proof of Theorem \ref{thm:NU_anab_without_dec_for_stable}. In Sections \ref{sec:uniform_bound} and \ref{sec:nonex_of_lifts} we give two further arguments needed in its proof. Finally, in Section \ref{sec:proofofanabthms} we prove Theorem \ref{thm:NU_anab_without_dec_for_stable}.

%************************************************************************************************************************************************************
%************************************************************************************************************************************************************

\subsection*{Acknowledgements}

The results in this paper coincide essentially with a part of author's Ph.D. thesis \cite{IvDiss}, which was written under supervision of Jakob Stix at the University of Heidelberg. The author is very grateful to him for the very good supervision, and to Kay Wingberg, Johnannes Schmidt and a lot of other people for very helpful remarks and interesting discussions. The work on author's Ph.D. thesis was partially supported by Mathematical Center Heidelberg and the Mathematical Institute Heidelberg. Also the author is grateful to both of them for their hospitality and the excellent working conditions. 

%************************************************************************************************************************************************************
%************************************************************************************************************************************************************

\section{Stable sets} \label{sec:stable_sets_revsec} 

We briefly recall the concept of stability from \cite{IvStableSets}. 

\begin{Def}[part of \cite{IvStableSets} Definitions 2.4, 2.7] Let $S$ be a set of primes of $K$ and $\cL/K$ any extension. 
\begin{itemize}
\item[(i)] Let $\lambda > 1$. A finite subextension $\cL/L_0/K$ is \textbf{$\lambda$-stabilizing for $S$ for $\cL/K$}, if there exists a subset $S_0 \subseteq S$ and some $a \in (0,1]$, such that $\lambda a > \delta_L(S_0) \geq a > 0$ for all finite subextensions $\cL/L/L_0$. We say that $S$ is \textbf{$\lambda$-stable}, if it has a $\lambda$-stabilizing extension for $\cL/K$. We say that $S$ is \textbf{stable for $\cL/K$}, if it is $\lambda$-stable for $\cL/K$ for some $\lambda > 1$. We say that $S$ is ($\lambda$-)\textbf{stable}, if it is ($\lambda$-)stable for $K_S/K$.
\item[(ii)] Let $p$ be a rational prime. We say that $(S, \cL/K)$ satisfies $(\dagger)_p^{\rm rel}$, if $\mu_p \subseteq \cL$ and $S$ is $p$-stable for $\cL/K$ or $\mu_p \not\subseteq \cL$ and $S$ is stable for $\cL(\mu_p)/K$. We say that $S$ satisfies $(\dagger)_p$, if $(S, K_S/K)$ satisfies $(\dagger)_p^{\rm rel}$.
\end{itemize}  
\end{Def}

We will need the two following results about stable sets, which we take from \cite{IvStableSets}. 

\begin{thm}[\cite{IvStableSets}, Theorem 5.1(A)] \label{thm:stable_MainThm_part_A}
Let $K$ be a number field, $p$ a rational prime and $S \supseteq R$ sets of primes of $K$ with $R$ finite. Assume that $(S,K_S^R/K)$ is $(\dagger)_p^{\rm rel}$. Then
\[ K_{S,\fp}^R \supseteq \begin{cases} K_{\fp}(p), & \text{if } \fp \in S \sm R \\ K_{\fp}^{\nr}(p) & \text{if } \fp \not\in S. \end{cases} \]
\end{thm}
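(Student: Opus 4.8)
The statement says that each boundary prime has as large a decomposition group in $\Gal_{K_S^R/K}$ as local theory permits: at $\fp\in S\sm R$ the completion $K_{S,\fp}^R$ should contain the maximal pro-$p$ extension $K_\fp(p)$, and at $\fp\notin S$ (hence $\fp\notin R$, since $R\subseteq S$) it should contain the maximal unramified pro-$p$ extension $K_\fp^{\nr}(p)$. The plan is to reduce this to realizing a single cyclic degree-$p$ local extension by a global extension with restricted ramification, and then to solve that realization problem by arithmetic duality, the vanishing input being supplied by stability. For the reduction: given a finite Galois $p$-extension $N/K_\fp$, choose a chain $K_\fp=N_0\subset N_1\subset\cdots\subset N_r=N$ with each $N_{i+1}/N_i$ cyclic of degree $p$, and construct inductively a tower of number fields $K=M_0\subset M_1\subset\cdots\subset M_r$ with $M_i\subseteq K_S^R$ and a prime $\fP_i\mid\fp$ of $M_i$ such that $(M_i)_{\fP_i}\supseteq N_i$, each step obtained by realizing $N_{i+1}/N_i$ globally over the number field $M_i$. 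This uses that $M_{i+1}\subseteq K_S^R$ as soon as $M_{i+1}/M_i$ is unramified outside $S(M_i)$ and completely split at $R(M_i)$ (transitivity of these conditions along $M_i/K$); that $\fp\in S\sm R$ forces $\fP_i\in S(M_i)\sm R(M_i)$; and---the first point needing care---that $\lambda$-stability and $(\dagger)_p^{\rm rel}$ are inherited by the finite subextensions $M_i/K$ inside $K_S^R$, which is among the basic properties of stable sets established in \cite{IvStableSets}. Passing to Galois closures reduces arbitrary finite $p$-extensions to Galois ones, so that in the limit $K_{S,\fp}^R\supseteq K_\fp(p)$; the case $\fp\notin S$ is identical, using the unique unramified degree-$p$ extension at each step, and yields $K_{S,\fp}^R\supseteq K_\fp^{\nr}(p)$.

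It therefore suffices to prove, for the data of the theorem and for $\fp\in S\sm R$, that every cyclic degree-$p$ extension $N/K_\fp$ is of the form $M_\fP$ for some cyclic degree-$p$ extension $M/K$ contained in $K_S^R$ and some prime $\fP\mid\fp$; and for $\fp\notin S$, that some such $M$ has $M_\fP/K_\fp$ unramified of degree $p$. As $K_S^R/K$ is Galois, such $M$ correspond to characters $\chi\in H^1(\Gal_{K_S^R/K},\bZ/p)$ with prescribed restriction at $\fp$, so the task is exactly to show that the localization map
\[ H^1(\Gal_{K_S^R/K},\bZ/p)\longrightarrow H^1(K_\fp,\bZ/p) \]
is surjective when $\fp\in S\sm R$, and is nonzero when $\fp\notin S$ (its image lying then automatically in the one-dimensional unramified subspace $H^1_{\nr}(K_\fp,\bZ/p)$). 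Alternatively one can bypass the tower and pass from this mod-$p$ statement to the full pro-$p$ conclusion by the standard lifting lemma for pro-$p$ groups, at the price of also controlling $H^2(\Gal_{K_S^R/K},\bZ/p)\to H^2(K_\fp,\bZ/p)$; staying in degree one seems cleaner.

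The surjectivity of the localization map is where the arithmetic of stable sets enters. By arithmetic duality over $\caO_{K,S}$---in the restricted-ramification form, and, since $p$ need not be invertible on $\Spec\caO_{K,S}$ when $S_p\not\subseteq S$, in its flat (fppf) version---the failure of surjectivity of the $\bZ/p$-localization is measured by a Tate--Shafarevich group $\Sha$ attached to the Kummer dual module $\mu_p$ and to $K_S^R/K$; a nonzero element of it amounts to a $\bZ/p$-extension of $K$ that is unramified outside $S$ and completely split at every prime of $S$. The hypothesis $(\dagger)_p^{\rm rel}$ is precisely what is needed to control this dual module: it forces $S$ to remain stable after $\mu_p$ is adjoined (whether or not $\mu_p\subseteq K_S^R$ already), so that the $\mu_p$-coefficient computation can be carried out over $K(\mu_p)$, where the module is trivial, and transferred back to $K$ by a restriction--corestriction argument, the degree $[K(\mu_p):K]$ being prime to $p$.

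The crux---and the step I expect to be the real obstacle---is then to deduce $\Sha=0$ from $\lambda$-stability with $\lambda=p$. Here one runs a Chebotarev-type counting in the tower $K_S^R/L/K$: a nonzero class in $\Sha$ would cut out, inside $K_S^R$, a $\bZ/p$-extension completely split along $S$, and tracking the density of the subset of $S_0(L)$ that detects this class shows that the density would have to contract by a factor $p$ at each layer $L$, which is incompatible with the uniform lower bound $\delta_L(S_0)\geq a>0$ provided by $\lambda$-stability with $\lambda=p$. This conversion of the combinatorial density condition into a cohomological vanishing is the analytic heart of the matter; the rest (arithmetic duality, the elementary tower manipulations) is standard. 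With $\Sha=0$ available over $K$ and over every $M_i$, the tower construction of the first paragraph yields $K_{S,\fp}^R\supseteq K_\fp(p)$ for $\fp\in S\sm R$ and $K_{S,\fp}^R\supseteq K_\fp^{\nr}(p)$ for $\fp\notin S$. For $p=2$ one must, as usual, attend to the Grunwald--Wang special case and to real places, but granted $(\dagger)_2^{\rm rel}$ no essentially new difficulty arises.
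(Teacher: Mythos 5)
First, a structural remark: this paper does not prove Theorem \ref{thm:stable_MainThm_part_A} at all --- it is imported verbatim from \cite{IvStableSets}, Theorem 5.1(A) --- so there is no in-paper proof to compare you with. Measured against the argument in that reference, your sketch follows essentially the same route: realize one cyclic degree-$p$ local extension at a time along a tower inside $K_S^R$, identify the obstruction to prescribing the local behaviour (unramified outside $S$, split at the finite set $R$) with a Tate--Shafarevich group with $\mu_p$-coefficients via Poitou--Tate/Grunwald--Wang, pass to the tower over $K(\mu_p)$ and descend by a prime-to-$p$ restriction--corestriction when $\mu_p$ is not already present (this is exactly what the two cases of $(\dagger)_p^{\rm rel}$ are for), and kill the obstruction using stability. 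Your density mechanism is the correct one: a degree-$p$ layer cut out by a nonzero class is unramified outside $S$ and split at $R\subseteq S$, hence lies again in $K_S^R$, and being completely split along $S_0$ it multiplies $\delta(S_0)$ by $p$, which is incompatible with confining $\delta_L(S_0)$ to a window $[a,\lambda a)$ with $\lambda\leq p$.

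Two points in your write-up need adjusting. The vanishing of the obstruction group is not available ``over $K$ and over every $M_i$'' as you assert: stability only forces it above a stabilizing level of the tower (and, when $\mu_p\not\subseteq K_S^R$, only after the descent from $K(\mu_p)$), exactly as in Proposition \ref{prop:Sha2vanishingwoinverting}, which is a statement about a direct limit over the tower rather than about each finite level. This costs nothing for the theorem, since the conclusion concerns the local field at the top, $K_{S,\fp}^R$, so you may start your inductive tower at any sufficiently large finite level containing a stabilizing field; but the induction as written, beginning at $M_0=K$, is not justified. Second, the duality you invoke is stated for $\Gal_{K,S}$ (with flat cohomology if $S_p\not\subseteq S$), not for $\Gal(K_S^R/K)$; the split-at-$R$ requirement should be imposed as finitely many extra local conditions (or handled in the limit), a routine but necessary modification if the argument is to be written out in the degree-one form you prefer.
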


\begin{prop}[\cite{IvStableSets}, Proposition 5.13(ii)]  \label{prop:Sha2vanishingwoinverting}
Let $K$ be a number field, $S$ a set of primes of $K$. Let $p$ be a rational prime, $r > 0$ an integer. Assume that either $p$ is odd or $K_S$ is totally imaginary. Let $K_S/\cL/K$ be a normal subextension. Assume $(S,\cL/K)$ is $(\dagger)_p^{\rm rel}$ and $p^{\infty}|[\cL:K]$. Then
\[ \dirlim_{\cL/L/K} \Sha^2(K_S/L; \bZ/p^r\bZ) = 0. \]
\end{prop}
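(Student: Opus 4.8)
The plan is to use Poitou--Tate duality for restricted ramification to rewrite $\Sha^2(K_S/L;\bZ/p^r\bZ)$ as the Pontryagin dual of $\Sha^1(K_S/L;\mu_{p^r})$, to pass to the colimit so as to reduce to the vanishing of an inverse limit of pieces of $S$-ideal class groups along norm maps, and finally to kill that limit using $p^{\infty}\mid[\cL:K]$ together with the stability of $S$. Two of the hypotheses play supporting roles: ``$p$ odd or $K_S$ totally imaginary'' removes the archimedean local terms from the duality, and $(\dagger)_p^{\mathrm{rel}}$ guarantees that either $\mu_p\subseteq\cL$, or else that $S$ is still stable for $\cL(\mu_p)/K$; since $[\cL(\mu_p):\cL]$ divides $p-1$ while $p^{\infty}\mid[\cL:K]$, one may harmlessly replace $\cL$ by $\cL(\mu_p)$ and assume $\mu_p\subseteq\cL$ throughout.

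The first real step is to make Poitou--Tate duality work although $p$ need not be invertible on $\Spec\caO_{L,S}$. This is where Theorem \ref{thm:stable_MainThm_part_A} enters: it gives $K_{S,\fp}\supseteq K_{\fp}(p)$ for $\fp\in S$ and $K_{S,\fp}\supseteq K_{\fp}^{\nr}(p)$ for $\fp\notin S$, which is precisely the local input ensuring that the nine-term Poitou--Tate sequence for $\Gal_{L,S}$ with $\bZ/p^r\bZ$-coefficients remains valid -- the localisation maps into the local cohomology at the primes of $S$ are surjective in degree $2$, and the primes away from $S$ (including those above $p$) contribute as in the unramified case. Reading off this sequence one obtains the duality $\Sha^2(K_S/L;\bZ/p^r\bZ)\cong\Sha^1(K_S/L;\mu_{p^r})^{\vee}$, functorial in $L$, with restriction on the left corresponding to corestriction -- the ideal norm -- on the right. (This restricted-ramification duality, including the case where $p$ is not invertible, is developed in \cite{IvStableSets}.)

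Taking the colimit over the finite subextensions $\cL/L/K$ and using that Pontryagin duality turns colimits into limits yields
\[ \dirlim_{\cL/L/K}\Sha^2(K_S/L;\bZ/p^r\bZ)\;\cong\;\Bigl(\prolim_{\cL/L/K}\Sha^1(K_S/L;\mu_{p^r})\Bigr)^{\!\vee}, \]
the inverse limit being taken along the norm maps. By Kummer theory $\Sha^1(K_S/L;\mu_{p^r})$ is a $p$-primary subquotient of the $S$-ideal class group $\mathrm{Cl}_S(L)$ -- namely the classes that become trivial locally at every prime of $S$ -- with transition maps induced by the ideal norm. So everything comes down to showing that this inverse limit of $S$-class-group pieces vanishes.

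This last step is the crux and, I expect, the main obstacle. It is not formal: although $p\mid[L':L]$ cofinally, so that $\cores_{L'/L}\circ\res_{L'/L}$ is multiplication by $[L':L]$ and hence zero on $p$-torsion, inverse limits of class groups along norm maps are in general far from zero -- they are the Iwasawa modules of the theory -- so degree divisibility alone gives nothing. What must be exploited is the stability of $S$: the stabilising subset $S_0\subseteq S$, whose density $\delta_L(S_0)$ remains in the fixed interval $[a,\lambda a)$ for \emph{all} finite $\cL/L/L_0$ and in particular does not decay along the tower, together with Chebotarev applied at each finite level, should force the inverse limit $\prolim_{\cL/L/K}\Sha^1(K_S/L;\mu_{p^r})$ to vanish -- in fact the inverse system to be pro-zero. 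In the classical situation $S\supseteq S_p\cup S_{\infty}$ the analogous vanishing is a form of Iwasawa's theorem on the triviality of $H^2$ along the cyclotomic tower; here the pair ``$(\dagger)_p^{\mathrm{rel}}$ via Theorem \ref{thm:stable_MainThm_part_A}'' and ``$S$ stable, $p^{\infty}\mid[\cL:K]$'' jointly take over the role of that hypothesis, and making this precise -- along with the secondary technical point of setting up Poitou--Tate in the non-$p$-invertible setting -- is where essentially all the work lies.
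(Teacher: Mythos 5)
First, note that this paper does not prove Proposition \ref{prop:Sha2vanishingwoinverting} at all: it is imported verbatim from \cite{IvStableSets} (Proposition 5.13(ii)), so there is no in-paper argument to compare against; your sketch has to stand on its own, and it does not. The decisive issue is that your final step -- that stability plus $p^{\infty}\mid[\cL:K]$ forces the inverse system $\{\Sha^1(K_S/L;\mu_{p^r})\}$ along corestriction/norm maps to be pro-zero -- is exactly the mathematical content of the proposition, and you explicitly leave it as something that ``should'' follow, conceding that essentially all the work lies there. Degree divisibility only gives $\cores\circ\res=0$, which says nothing about the transition maps themselves, and (as you rightly observe) inverse limits of class-group-like objects along norms are typically nonzero; so without a concrete mechanism (the cited source's mechanism, as described in Remark 1) of Section \ref{sec:stable_sets_revsec}, is to use the stability interval $[a,\lambda a)$ to \emph{bound the size} of the relevant Tate--Shafarevich/Kummer groups uniformly along the tower and then deduce vanishing in the limit), the sketch reduces the proposition to an unproved statement of comparable difficulty.

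There are also two concrete flaws in the reductions you do carry out. (1) The ``harmless replacement'' of $\cL$ by $\cL(\mu_p)$ is not available: since $S$ need not contain $S_p$ (the paper stresses that even $\bN(S)=\{1\}$ can occur), the extension $K(\mu_p)/K$ may be ramified outside $S$, so $\cL(\mu_p)$ need not lie in $K_S$, the colimit in the statement runs only over subextensions of $\cL\subseteq K_S$, and moreover $(\dagger)_p^{\rm rel}$ in the case $\mu_p\not\subseteq\cL$ only gives stability (not $p$-stability) for $\cL(\mu_p)/K$ -- the two-case shape of $(\dagger)_p^{\rm rel}$ exists precisely because this reduction cannot be made naively. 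This also undercuts the later Kummer-theoretic identification of $\Sha^1(K_S/L;\mu_{p^r})$ with a piece of the $S$-class group, which needs the roots of unity in $L$. (2) The nine-term Poitou--Tate sequence for $G_{L,S}$ with $\bZ/p^r\bZ$-coefficients is only standard when $S\supseteq S_p\cup S_\infty$; asserting that Theorem \ref{thm:stable_MainThm_part_A} makes it ``remain valid'' is itself a substantial claim requiring proof (the local realization statements $K_{S,\fp}\supseteq K_\fp(p)$, resp.\ $K_\fp^{\nr}(p)$, are an ingredient one can use to control the comparison with $S\cup S_p$, but they do not by themselves deliver the duality $\Sha^2(\bZ/p^r\bZ)\cong\Sha^1(\mu_{p^r})^\vee$). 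So the proposal is a plausible plan whose two load-bearing steps -- duality without $p$ invertible on $\Spec\caO_{L,S}$, and the pro-vanishing forced by stability -- are asserted rather than established.
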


\begin{rems}\mbox{}
\begin{itemize}
\item[1)] Many results (e.g., such as the two quoted above, but also various Hasse-principles, Grunwald-Wang style results, finite cohomological dimension, etc.) holding for sets with density one also hold (with respect to a prime $p$) for stable sets of primes (satisfying $(\dagger)_p$). The proofs in the case of sets with density one rely heavily on the fact that various Tate-Shafarevich groups of $\Gal_{K,S}$ with finite resp. divisible coefficients vanish. This is in general not true for stable sets and the reason why many proofs still work, is that one can, using stability conditions, bound the size of Tate-Shafarevich groups, which in turn implies the vanishing of them in the limit taken over all finite subextensions of certain (infinite) subextensions $K_S/\cL/K$. 

\item[2)] Most natural examples of stable sets are almost Chebotarev sets: if $M/K$ is a finite Galois extension and $\sigma \in \Gal_{M/K}$, then the associated Chebotarev set is defined as
\[ P_{M/K}(\sigma) := \{ \fp \in \Sigma_K \colon \fp \text{ is unramified in $M/K$ and $\Frob_{\fp, M/K}$ is the conjugacy class of } \sigma \}. \]
We say that a set is almost Chebotarev, if it differs from a Chebotarev set only by a subset of density zero. They are stable in many cases (cf. \cite{IvStableSets} Corollary 3.4), and even if not, they inherit many properties of stable sets. Also a stable almost Chebotarev set satisfies $(\dagger)_p$ for almost all rational primes $p$.

\item[3)] If $S$ contains an almost Chebotarev set, then the global realization result Theorem \ref{thm:stable_MainThm_part_A} holds for $S$ with respect to all rational primes $p$ (i.e., $(K_S)_{\fp} = \overline{K_{\fp}}$), even if $S$ does not satisfy $(\dagger)_p$ for some $p$'s. The proof of this involves further arguments (cf. \cite{IvInfDens}).
\end{itemize}
\end{rems}

%************************************************************************************************************************************************************
%************************************************************************************************************************************************************

\section{Local correspondence at the boundary} \label{sec:Loccor} 

Generalizing results of Neukirch, we show that under certain conditions on the set $S$ of primes of $K$, the decomposition groups of primes in $S$ are intrinsically  determined by $G_{K,S}$. Since we in general do not know, whether the decomposition groups are the full local groups, we can not characterize them as absolute Galois groups of local fields and thus we have to deal with $p$-Sylow subgroups, which are of particularly simple kind. 

\subsection{Some technical preparations}

As in \cite{IvLocCorBound} we use the following notational short-cut.

\begin{Def}[\cite{IvLocCorBound} Definition 2.1] \label{def:pdecsubgroups}
A group of \emph{$p$-decomposition type} is a non-abelian pro-$p$ Demushkin group of rank $2$.
\end{Def}

Thus a group of $p$-decomposition type is of the form $\bZ_p \ltimes \bZ_p$ with $\bZ_p \har \Aut(\bZ_p) = \bZ_p^{\ast}$ injective (this follows from \cite{NSW} 3.9.9, 3.9.11). Such groups have an easy structure theory and one easily describes all their subgroups explicitly (cf. \cite{IvLocCorBound} Lemma 2.2).

\begin{lm}\label{lm:dec_intersec_with_GWT}
Let $K$ be a number field, $S$ a set of primes of $K$, $p$ a rational prime. Assume $S$ is stable and satisfies $(\dagger)_p$. If $\bap$ is a prime of $K_S$, let $D_{\bap,p} \subseteq D_{\bap}$ denote a $p$-Sylow subgroup. For any $\bap_1 \neq \bap_2 \in S(K_S)$ we have inside $\Gal_{K,S}$:
\[(D_{\bap_1,p} \colon D_{\bap_1,p} \cap D_{\bap_2,p}) = \infty \]
\noindent and 
\[
(D_{\bap_1} \colon D_{\bap_1} \cap D_{\bap_2}) = \infty.
\]
\end{lm}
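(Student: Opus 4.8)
The strategy is to reduce the infinite-index statements to a local computation about groups of $p$-decomposition type. First I would recall, using Theorem~\ref{thm:stable_MainThm_part_A} (applied with $R = \emptyset$, and noting that $(\dagger)_p$ is exactly $(S,K_S/K)$ being $(\dagger)_p^{\rm rel}$), that for each $\bap \in S(K_S)$ the decomposition group $D_{\bap}$ surjects onto $\Gal(K_{\bap}(p)/K_{\bap})$; more precisely $K_{S,\bap} \supseteq K_{\bap}(p)$, so a $p$-Sylow subgroup $D_{\bap,p}$ of $D_{\bap}$ has $\Gal_{K_{\bap}(p)/K_{\bap}}$ as a quotient. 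The point of Definition~\ref{def:pdecsubgroups} is that $\Gal_{K_{\bap}(p)/K_{\bap}}$ is a group of $p$-decomposition type (for $\bap$ a finite prime not above $p$ this is the standard tame structure $\bZ_p \ltimes \bZ_p$; the cases $\bap \mid p$ or $\bap$ archimedean need a small separate remark, but archimedean primes are handled by $K_1$ totally imaginary in the global theorem and in any case carry no odd $p$-part). Thus $D_{\bap,p}$ maps onto a group in which every pair of distinct nontrivial closed subgroups that are not commensurable has infinite index in one another — and in a $p$-decomposition type group, any two subgroups of infinite index intersect in something still of infinite index in each.

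The heart of the argument is to show $D_{\bap_1,p} \cap D_{\bap_2,p}$ maps to a \emph{small} subgroup under at least one of the two projections to $\Gal_{K_{\bap_i}(p)/K_{\bap_i}}$. The mechanism is the classical one used by Neukirch: if $D_{\bap_1,p} \cap D_{\bap_2,p}$ had finite index in $D_{\bap_1,p}$, then the fixed field of this intersection would be a finite extension of $K_{\bap_1}^{(p)}$-type on which both $\bap_1$ and $\bap_2$ induce the ``same'' large local behaviour, which forces $\bap_1$ and $\bap_2$ to have the same restriction to some subfield — impossible for distinct primes of the \emph{same} global field $K_S$ (here $\bap_1,\bap_2$ lie over $K_S$, hence over $K$, and distinct primes of a number field are distinct). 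Concretely I would argue: the decomposition group of a prime in an infinite extension determines (and is determined up to the relevant Sylow by) the local field at that prime, two distinct primes give linearly disjoint completions inside any finite layer, and the compositum argument bounds the intersection. Since a subgroup of a group of $p$-decomposition type that surjects onto the whole group must itself be the whole group (these groups are not the union of two proper closed subgroups of infinite index meeting in finite index — one checks this directly from the subgroup classification in \cite{IvLocCorBound} Lemma~2.2), the finite-index assumption collapses.

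For the second assertion, $(D_{\bap_1} : D_{\bap_1}\cap D_{\bap_2}) = \infty$, I would deduce it from the first: a $p$-Sylow $D_{\bap_1,p}$ of $D_{\bap_1}$ meets $D_{\bap_2}$ in a pro-$p$ subgroup contained in some conjugate of $D_{\bap_2,p}$, and after conjugating we may assume it sits in $D_{\bap_2,p}$ up to adjusting $\bap_2$ within its conjugacy class (which does not change whether it lies in $S(K_S)$ nor whether it is distinct from $\bap_1$). If $D_{\bap_1}\cap D_{\bap_2}$ had finite index in $D_{\bap_1}$, intersecting with the Sylow $D_{\bap_1,p}$ would give a finite-index subgroup of $D_{\bap_1,p}$ contained in a conjugate of $D_{\bap_2,p}$, contradicting the first part. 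The only subtlety is bookkeeping with conjugacy: different primes over the same $\fp$ have conjugate decomposition groups, so ``$\bap_1 \neq \bap_2$'' must be used at the level of the fixed fields, not naively at the level of the groups.

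The main obstacle I anticipate is the case analysis on the nature of $\bap_i$ — in particular a prime $\bap$ with $\bap \mid p$, where $\Gal_{K_{\bap}(p)/K_{\bap}}$ is a Demushkin group of rank $[K_{\bap}:\bQ_p]+2$ rather than of $p$-decomposition type, so the clean ``two subgroups of infinite index'' lemma does not apply verbatim; here one needs either that $(\dagger)_p$ still gives enough of the local group (the rank-$2$ ``tame-like'' quotient corresponding to $K_{\bap}^{\nr}(p)$ always sits inside, and distinct primes are already distinguished by their unramified parts), or one appeals to the finer subgroup analysis of Demushkin groups. Everything else is a faithful transcription of Neukirch's local argument into the pro-$p$-Sylow setting permitted by the stability hypotheses.
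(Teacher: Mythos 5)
Your reduction of the second assertion to the first, and the observation that $D_{\bap,p}$ surjects onto $\Gal_{K_{\bap}(p)/K_{\bap}}$ via Theorem \ref{thm:stable_MainThm_part_A}, are fine, but the core of the argument is missing. The step where a finite-index intersection is said to force ``$\bap_1$ and $\bap_2$ to have the same restriction to some subfield --- impossible for distinct primes'' is circular: distinct primes of $K_S$ do restrict to the same prime in many finite subextensions, and what must be excluded is precisely that their decomposition groups nearly coincide. No structure theory of groups of $p$-decomposition type (nor of Demushkin groups at places above $p$) can by itself distinguish $D_{\bap_1}$ from $D_{\bap_2}$; the lemma is genuinely global, expressing that $K_S$ is large enough to separate the two primes, and this is exactly where stability has to enter. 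Your plan only uses Theorem \ref{thm:stable_MainThm_part_A} with $R=\emptyset$, which realizes the local $p$-extension at each prime separately but says nothing about realizing it at $\bap_1$ \emph{while keeping $\bap_2$ split}, so it cannot rule out $(D_{\bap_1,p}\colon D_{\bap_1,p}\cap D_{\bap_2,p})<\infty$; the ``linearly disjoint completions / compositum'' sentence is not a proof.

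The paper's proof supplies precisely this missing global input. After an index computation one passes to a finite level and assumes $\bap_1|_K\neq\bap_2|_K$, and by \cite{IvStableSets} Lemma 2.8 one may assume $(S,\cL/L)$ is $(\dagger)_p^{\rm rel}$ for all intermediate levels; then Theorem \ref{thm:stable_MainThm_part_A} is applied with the \emph{nonempty} set $R=\{\bap_2|_K\}$, i.e., to the maximal subextension $K_S^R$ of $K_S/K$ in which $\bap_2|_K$ is completely split. In the quotient $\Gal_{K,S}\twoheadrightarrow\Gal_{K,S}^R$ the whole group $D_{\bap_2}$ lands in the kernel $V$, while the theorem gives $K_{S,\fp_1}^R\supseteq K_{\fp_1}(p)$, so $D_{\bap_1,p}$ has infinite image; hence $(D_{\bap_1,p}\colon D_{\bap_1,p}\cap D_{\bap_2,p})\geq (D_{\bap_1,p}\colon D_{\bap_1,p}\cap V)=\infty$, and the statement for the full decomposition groups follows at once (no conjugacy bookkeeping or case analysis at primes above $p$ is needed, since the argument never uses the shape of the local group at $\bap_1$, only that its image stays infinite). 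To repair your proposal you would have to import this ``unramified outside $S$ and split at $R$'' realization result, or some comparable global separation statement, which Chebotarev-type arguments cannot provide here because stable sets may have arbitrarily small density.
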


%TODO: same holds for decgroups at all primes $\fp_1,\fp_2$ such that $S$ stable for $K_{S\fp_1\fp_2}/K$ und sat. (\ast)_p^{\rm stab} for this extension...! In other words, this holds for many unramified decomposition groups too.

\begin{proof} 
\begin{comment} 
Write $D_i := D_{\bap_i,p}$. Assume for $i =1,2$, we have $U_i \subseteq D_i$ an open subgroup, and we have shown that $(U_1 \colon U_1 \cap U_2) = \infty$. We show that also $(D_1 \colon D_1 \cap D_2) = \infty$. In fact, we have $(D_i \colon U_i) < \infty$, hence $(D_1 \cap D_2 \colon D_1 \cap U_2), (D_1 \cap D_2 \colon U_1 \cap D_2) < \infty$. Hence also $(D_1 \cap D_2 \colon U_1 \cap U_2) < \infty$. Now $(U_1 \colon U_1 \cap U_2) = \infty$ implies \[ (D_1 \colon D_1 \cap D_2) (D_1 \cap D_2 \colon U_1 \cap U_2) = (D_1 \colon U_1 \cap U_2) = \infty, \] and the second factor in the product is finite, hence we get $(D_1 \colon D_1 \cap D_2) = \infty$.
\end{comment}
An easy index computation shows that one can go up to a finite extension of $K$ inside $K_S$. Using this, we can assume that $\bap_1|_K \neq \bap_2|_K$ and (by \cite{IvStableSets} Lemma 2.8) that for any subextensions $K_S/\cL/L/K$ with $L/K$ finite, the pair $(S,\cL/L)$ is $(\dagger)_p^{\rm rel}$. Now an application of Theorem \ref{thm:stable_MainThm_part_A} (with $R = \{ \bap_2|_K \}$) shows that $D_{\bap_2}$ lies in the kernel of the projection
\[ \Gal_{K,S} \tar \Gal_{K,S}^R, \]

\noindent whereas $D_{\bap_1,p}$ has infinite image. Thus $(D_{\bap_1,p} \colon D_{\bap_1,p} \cap D_{\bap_2,p}) \geq (D_{\bap_1,p} \colon D_{\bap_1,p} \cap V) = \infty$, where $V := \ker (\Gal_{K,S} \tar \Gal_{K,S}^R)$. The second statement follows from the first. 
\end{proof}

\begin{lm}\label{lm:not_p-adic_shit}
Let $\kappa$ be a $p$-adic field. Let $\kappa^{\prime}/\kappa$ be a Galois extension containing the maximal pro-$p$-extension of any finite subfield $\kappa^{\prime}/\lambda/\kappa$. Then $\Gal_{\kappa^{\prime}/\kappa}$ do not contain any subgroup of $p$-decomposition type.
\end{lm}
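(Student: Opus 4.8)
The plan is to argue by contradiction: assume $H\subseteq\Gal_{\kappa'/\kappa}$ is a (closed) subgroup of $p$-decomposition type and pass to the fixed field $M:=(\kappa')^H$. Since groups of $p$-decomposition type are pro-$p$ (they are of the form $\bZ_p\ltimes\bZ_p$), the extension $\kappa'/M$ is pro-$p$, so $\kappa'$ lies in the maximal pro-$p$-extension $M(p)$; the first step is to upgrade this to $\kappa'=M(p)$ using the hypothesis on $\kappa'$. For a finite Galois $p$-subextension $N/M$ of $M(p)/M$, write $N=M(\alpha)$ with $\alpha$ primitive and descend the minimal polynomial $f$ of $\alpha$ over $M$ to a finite subextension $\mu_0/\kappa$ of $M/\kappa$ containing its coefficients; with $N_0$ the splitting field of $f$ over $\mu_0$ one gets $N=MN_0$, and $\mu_1:=N_0\cap M$ is a finite subextension of $M/\kappa$ with $\Gal_{N_0/\mu_1}\cong\Gal_{N/M}$ a finite $p$-group, hence $N_0\subseteq\mu_1(p)\subseteq\kappa'$ by hypothesis and so $N=MN_0\subseteq\kappa'$. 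This gives $\kappa'=M(p)$ and therefore $H\cong\Gal_M(p)$, so the lemma is reduced to: $\Gal_M(p)$ is never of $p$-decomposition type when $M$ is algebraic over the $p$-adic field $\kappa$.

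Suppose $\Gal_M(p)$ were of $p$-decomposition type, i.e.\ a non-abelian Demushkin group of rank $2$. Then $\dim_{\bF_p}H^1(\Gal_M,\bF_p)=\dim_{\bF_p}H^1(\Gal_M(p),\bF_p)=2$ (continuous homomorphisms to $\bF_p$ factor through the maximal pro-$p$ quotient, and there the dimension is the number of generators), while $\dim_{\bF_p}H^2(\Gal_M(p),\bF_p)=1$ and the inflation $H^2(\Gal_M(p),\bF_p)\hookrightarrow H^2(\Gal_M,\bF_p)$ is injective (as $\Gal_M(p)$ is the maximal pro-$p$ quotient), so $H^2(\Gal_M,\bF_p)\neq0$, i.e.\ $\cd_p(\Gal_M)=2$. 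If $[M:\kappa]<\infty$, then $M$ is a $p$-adic field, and by the classical description of its maximal pro-$p$ Galois group (local class field theory with the theorems of Demushkin and Serre, cf.\ \cite{NSW}) $\Gal_M(p)$ is free pro-$p$ of rank $[M:\bQ_p]+1\geq2$ if $\mu_p\not\subseteq M$ (then $H^2(-,\bF_p)=0$) and Demushkin of rank $[M:\bQ_p]+2\geq3$ if $\mu_p\subseteq M$ — in neither case of $p$-decomposition type, a contradiction.

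The remaining case $[M:\kappa]=\infty$ is the main obstacle, since here $\Gal_M(p)$ is not controlled by any ready classification. I would write $M=\bigcup_\nu\nu$ over the finite subextensions $\nu/\bQ_p$ of $M/\bQ_p$ and use $H^i(\Gal_M,\bF_p)=\varinjlim_\nu H^i(\Gal_\nu,\bF_p)$ (restriction maps), splitting into two subcases. If some finite $\nu\subseteq M$ with $[\nu:\bQ_p]\geq2$ admits no degree-$p$ Galois extension inside $M$, then $\res\colon H^1(\Gal_\nu,\bF_p)\to H^1(\Gal_M,\bF_p)$ is injective — its kernel is the set of characters cutting out a $\bZ/p$-extension of $\nu$ contained in $M$ — so $2=\dim_{\bF_p}H^1(\Gal_M,\bF_p)\geq[\nu:\bQ_p]+1\geq3$, a contradiction. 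Otherwise every finite $\nu\subseteq M$ with $[\nu:\bQ_p]\geq2$ has a degree-$p$ Galois extension inside $M$; iterating, $p^\infty\mid[M:\nu]$ for every finite $\nu\subseteq M$, and then $H^2(\Gal_M,\bF_p)=\varinjlim_\nu H^2(\Gal_\nu,\bF_p)=0$, because $H^2(\Gal_\nu,\bF_p)\cong\mu_p(\nu)^\vee$ is already $0$ if $\mu_p\not\subseteq M$, and if $\mu_p\subseteq M$ the restriction $H^2(\Gal_\nu,\bF_p)\to H^2(\Gal_{\nu'},\bF_p)$ is multiplication by $[\nu':\nu]$ on $\bZ/p$ via the $\inv$-maps and so vanishes for the cofinally many $\nu'$ with $p\mid[\nu':\nu]$; this contradicts $\cd_p(\Gal_M)=2$. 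The essential difficulty throughout is this infinite-degree dichotomy: one must turn ``Demushkin of rank $2$'' into a pair of mutually exclusive cohomological statements, each refutable at finite level by local Tate duality and the behaviour of restriction in the colimit.
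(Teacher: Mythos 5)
Your argument is correct, but it takes a genuinely different route from the paper's. The paper (following \cite{IvLocCorBound} Lemma 3.2) never leaves finite level: given $H$ of $p$-decomposition type, it chooses an open subgroup $U\supseteq H$ (with fixed field $\lambda$ finite over $\kappa$) such that the image of $H$ in the maximal pro-$p$ quotient $U^{(p)}$ is not pro-cyclic; by the hypothesis $U^{(p)}\cong\Gal_\lambda(p)$, which by \cite{NSW} 7.5.11 is free pro-$p$ or Demushkin of rank $[\lambda:\bQ_p]+2>2$, and since this group is torsion-free the structure of subgroups of $\bZ_p\ltimes\bZ_p$ forces $H\har U^{(p)}$; a contradiction then comes from subgroup theory ($\cd_p H=2$ rules out the free case, and a rank-growth lemma for open subgroups of Demushkin groups rules out rank $>2$). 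You instead pass to the fixed field $M$ of $H$ itself, which may be infinite over $\kappa$, and first upgrade $\kappa'$ to the full maximal pro-$p$ extension $M(p)$ by descending each finite Galois $p$-extension $N/M$ to a finite subextension $\mu_1$ of $M/\kappa$ (this descent step is sound: $N=MN_0$ with $N_0/\mu_1$ Galois of $p$-power degree, so $N_0\subseteq\mu_1(p)\subseteq\kappa'$), so that $H\cong\Gal_M(p)$; the case $[M:\kappa]<\infty$ is then the same classification as in the paper, while the infinite case is handled by a cohomological dichotomy — either some finite $\nu\subseteq M$ with $[\nu:\bQ_p]\geq 2$ has no degree-$p$ extension inside $M$, forcing $\dim_{\bF_p}\coh^1(\Gal_M,\bF_p)\geq[\nu:\bQ_p]+1>2$, or $p^\infty\mid[M:\nu]$ for all $\nu$ and then $\coh^2(\Gal_M,\bF_p)=\dirlim_\nu\coh^2(\Gal_\nu,\bF_p)=0$ by the $\inv$-map description of restriction, contradicting the injectivity of inflation from $\coh^2(\Gal_M(p),\bF_p)\cong\bF_p$. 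The trade-off: the paper's argument needs the explicit subgroup theory of rank-$2$ Demushkin groups (\cite{IvLocCorBound} Lemma 2.2 and the rank-growth lemma) but no cohomology of infinite local extensions, whereas yours avoids that subgroup theory entirely at the price of the (standard, and correctly invoked) facts that inflation to $\coh^2$ with $\bF_p$-coefficients is injective for the maximal pro-$p$ quotient, that Galois cohomology commutes with the filtered colimit over finite subextensions, and local Tate duality/Euler characteristic computations; both are complete proofs.
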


\begin{proof} Is the same as for \cite{IvLocCorBound} Lemma 3.2. \qedhere

\begin{comment}
Suppose $H \subseteq G := \Gal_{\kappa^{\prime}/\kappa}$ is a subgroup of $p$-decomposition type. For an open subgroup $U$ of $G$, let $U^{(p)}$ denote the maximal pro-$p$-quotient of $U$. We claim that one can choose $H \subset U \subseteq G$ with last inclusion open, such that the image of $H$ in $U^{(p)}$ is not (pro-)cyclic. Indeed, choose an open normal subgroup $V \triangleleft \Gal_{\kappa}$ such that $H/H \cap V$ is not (pro-)cyclic. Then let $U$ be the preimage under $\Gal_{\kappa^{\prime}/\kappa} \tar \Gal_{\kappa^{\prime}/\kappa}/V$ of the $p$-subgroup $H/H \cap V$.

Let $\lambda$ be the extension of $\kappa$ corresponding to $U$. By our assumption and by \cite{NSW} 7.5.11, $U^{(p)}$ is either free or a
Demushkin group of rank $[\lambda : \bQ_p] + 2 > 2$. In both cases
$U^{(p)}$, being of finite cohomological dimension, is torsion-free,
hence the image of $H$ in $U^{(p)}$ is torsion-free, hence $H$ embeds
into $U^{(p)}$ (using \cite{IvLocCorBound} Lemma 2.2, one sees that the
kernel of the map $H \rar U^{(p)}$ can only be the trivial subgroup
of $H$). Now, $U^{(p)}$ can neither be free: this contradicts
$cd_p(H) = 2$, nor a Demushkin group of rank $>2$: this contradicts
Lemma \cite{IvLocCorBound} \ref{lm:Demsubgroupsrankgrowth}. All together, we get a
contradiction, which proves the lemma.
\end{comment}
\end{proof}

\begin{lm} \label{lm:oxujet_dostali_suki}
Let $K$ be a number field, $S$ a set of primes of $K$ and $p$ a rational prime. Assume $S$ is stable and satisfies $(\dagger)_p$. 
\begin{itemize}
\item[(i)]  Let $H_0 \subseteq D_{\bap}$ be a subgroup of $p$-decomposition type, where $\bap \in S_f$. Then $\N_{\Gal_{K,S}}(H_0) \subseteq D_{\bap}$.
\item[(ii)] Let $H$ be a subgroup of $\Gal_{K,S}$ of $p$-decomposition type. Assume there is an open subgroup $H_0 \subseteq H$ such that $H_0 \subseteq D_{\bap,p}$ for some prime $\bap \in S_f$. Then $H \subseteq D_{\bap}$.
\end{itemize}
\end{lm}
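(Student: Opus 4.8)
The plan is to reduce both statements to Lemma~\ref{lm:dec_intersec_with_GWT} (and the easy structure theory of groups of $p$-decomposition type). For part (i), let $N := \N_{\Gal_{K,S}}(H_0)$ and take $g \in N$. The idea is that conjugation by $g$ sends $D_{\bap}$ to $D_{g\bap}$, hence $gH_0g^{-1} \subseteq D_{g\bap}$; but $gH_0g^{-1} = H_0 \subseteq D_{\bap}$, so $H_0 \subseteq D_{\bap} \cap D_{g\bap}$. If $g\bap \neq \bap$, then Lemma~\ref{lm:dec_intersec_with_GWT} forces $(D_{\bap} : D_{\bap} \cap D_{g\bap}) = \infty$, and a fortiori $(D_{\bap,p} : D_{\bap,p} \cap D_{g\bap,p}) = \infty$; but then $H_0$, being a group of $p$-decomposition type (so of cohomological dimension $2$, hence of positive ``rank'' in $D_{\bap,p} \cong \bZ_p \ltimes \bZ_p$), cannot be contained in an intersection of infinite index in $D_{\bap,p}$ --- more precisely, an open subgroup of $D_{\bap}$ contained in $D_{\bap} \cap D_{g\bap}$ would have finite index in $D_{\bap}$, contradicting the infinite index just established. (Here one should be a little careful about passing between $D_{\bap}$ and $D_{\bap,p}$: since $H_0$ is pro-$p$, it sits inside a $p$-Sylow subgroup of $D_{\bap}$, so one works with $p$-Sylow subgroups throughout, as in the first displayed formula of Lemma~\ref{lm:dec_intersec_with_GWT}.) Hence $g\bap = \bap$, i.e.\ $g \in D_{\bap}$, proving $N \subseteq D_{\bap}$.

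For part (ii), I would argue as follows. Since $H_0$ is open in $H$, it is again of $p$-decomposition type, and $H_0 \subseteq D_{\bap,p}$. First I would show $H$ normalizes (a suitable open subgroup related to) $H_0$ so as to apply part (i): concretely, for $h \in H$, the subgroup $hH_0h^{-1}$ is open in $hHh^{-1} = H$, hence $H_0 \cap hH_0h^{-1}$ is open in $H$, so it is itself a group of $p$-decomposition type contained in both $D_{\bap,p}$ and $hD_{\bap,p}h^{-1} = D_{h\bap,p}$. By the infinite-index statement of Lemma~\ref{lm:dec_intersec_with_GWT}, this forces $h\bap = \bap$, i.e.\ $h \in D_{\bap}$; since $h \in H$ was arbitrary, $H \subseteq D_{\bap}$.

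Alternatively, and perhaps more cleanly, part (ii) follows from part (i) once one knows $H \subseteq \N_{\Gal_{K,S}}(H_0')$ for some open $H_0' \subseteq H$ with $H_0' \subseteq D_{\bap}$: by the explicit description of subgroups of a group of $p$-decomposition type (\cite{IvLocCorBound} Lemma 2.2), every open subgroup of $H$ is normal in $H$ up to finite index, or one can replace $H_0$ by its normal core in $H$, which is still open and still of $p$-decomposition type and still contained in $D_{\bap,p}$; then $H \subseteq \N(H_0^{\mathrm{core}}) \subseteq D_{\bap}$ by (i). I would present whichever of these is shortest given the structure lemma already cited.

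The main obstacle I anticipate is purely bookkeeping: making the transition between decomposition groups $D_{\bap}$ and their $p$-Sylow subgroups $D_{\bap,p}$ rigorous, and checking that ``a group of $p$-decomposition type cannot be contained in a subgroup of infinite index'' is used correctly --- i.e.\ that $H_0 \subseteq D_{\bap} \cap D_{g\bap}$ really does contradict $(D_{\bap,p} : D_{\bap,p} \cap D_{g\bap,p}) = \infty$. The point is that $H_0$ is open in neither $D_{\bap}$ nor $D_{g\bap}$ in general, so one cannot directly say ``$H_0$ open $\Rightarrow$ finite index''. Instead one uses that $H_0$, having cohomological dimension $2$, must have finite index in any pro-$p$ group of $p$-decomposition type containing it, together with the fact that $D_{\bap} \cap D_{g\bap}$ is contained in the kernel $V$ of $\Gal_{K,S} \twoheadrightarrow \Gal_{K,S}^{R}$ (with $R = \{\bap|_K, g\bap|_K\}$, or a one-element version) on which $H_0$ would have to have infinite index --- contradiction. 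This is exactly the mechanism of Lemma~\ref{lm:dec_intersec_with_GWT}, so the work is just in quoting it with the right subgroups.
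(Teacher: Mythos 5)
Your overall strategy is the paper's: reduce everything to Lemma~\ref{lm:dec_intersec_with_GWT} via the observation that a subgroup of $p$-decomposition type must be open in the $p$-Sylow subgroup $D_{\bap,p}$ containing it, and prove (ii) from (i) by passing to the normal core of $H_0$ in $H$ (this is exactly what the paper does). However, there is one genuine missing step: you assume throughout that $D_{\bap,p}\cong \bZ_p\ltimes\bZ_p$, i.e.\ that the $p$-Sylow subgroup of $D_{\bap}$ is itself of $p$-decomposition type, and you justify the key finite-index claim by saying that $H_0$ ``must have finite index in any pro-$p$ group of $p$-decomposition type containing it''. This is only available once you know that $\bap$ does \emph{not} lie over $p$. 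The lemma as stated allows any $\bap\in S_f$, including $p$-adic primes, and for $\bap\mid p$ Theorem~\ref{thm:stable_MainThm_part_A} forces $K_{S,\bap}$ to contain the maximal pro-$p$-extension of $K_{\fp}$, so $D_{\bap,p}$ is (up to the relevant subquotients) free pro-$p$ or a Demushkin group of rank $[\cdot:\bQ_p]+2>2$; it is then simply false that a rank-$2$ subgroup would have to be open in it, and your contradiction with the infinite-index statement does not get off the ground.

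The paper closes this case as its very first step: by Lemma~\ref{lm:not_p-adic_shit} (whose hypotheses are verified using Theorem~\ref{thm:stable_MainThm_part_A}, i.e.\ using the stability/$(\dagger)_p$ assumptions), the decomposition group of a $p$-adic prime contains no subgroup of $p$-decomposition type at all, so $\bap\nmid p$; only then is $D_{\bap,p}$ metacyclic, non-procyclic because it contains $H_0$, hence of $p$-decomposition type, and only then does \cite{IvLocCorBound} Lemma~2.2 give that $H_0\subseteq D_{\bap,p}$ is open, which is what feeds into Lemma~\ref{lm:dec_intersec_with_GWT}. So you need to add this exclusion (or an equivalent argument for $p$-adic primes) explicitly; it is not a formality, since it is precisely where the stability hypothesis enters. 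With that step inserted, the rest of your argument --- the normalizer computation in (i), and either of your two versions of (ii), of which the normal-core one coincides with the paper's --- goes through.
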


\begin{proof}
(i): Let $D_{\bap,p} \subseteq D_{\bap}$ be a $p$-Sylow subgroup, containing $H_0$. First of all, by Lemma \ref{lm:not_p-adic_shit} (which assumptions are satisfied by Theorem \ref{thm:stable_MainThm_part_A}), $\bap$ does not lie over $p$. Consequently, $D_{\bap,p}$ is also of $p$-decomposition type (it can not be pro-cyclic, since it already contains $H_0$), and hence by \cite{IvLocCorBound} Lemma 2.2, the inclusion $H_0 \subseteq D_{\bap,p}$ is open. Let $x \in \N_{\Gal_{K,S}}(H_0)$. Then $H_0 = x H_0 x^{-1} \subseteq xD_{\bap}x^{-1} = D_{x\bap}$. Thus $D_{\bap} \cap D_{x\bap} \supseteq H_0$ contains an open subgroup of $D_{\bap,p}$. Hence Lemma \ref{lm:dec_intersec_with_GWT} implies $x\bap = \bap$, i.e., $x \in D_{\bap}$.

\noindent (ii): Replacing $H_0$ by the intersection of all its $H$-conjugates, we can assume that $H_0$ is normal in $H$. Since $H_0$ is also of $p$-decomposition type, part (i) implies $H \subseteq D_{\bap}$.
\end{proof}

%************************************************************************************************************************************************************
%************************************************************************************************************************************************************

\subsection{Characterization of decomposition groups} 

\begin{thm} \label{thm:class_of_p_dec_subgroups}
Let $K$ be a number field and $S$ a set of primes of $K$. Assume there is a rational prime $p$ such that $S \supseteq S_{\infty}$ is stable and satisfies $(\dagger)_p$. Let $H \subseteq \Gal_{K,S}$ be of $p$-decomposition type and assume that $H$ satisfies the following technical condition: for any intermediate subgroup $H \subseteq U \subseteq \Gal_{K,S}$ with last inclusion open, $p^{\infty}|(U:\langle\langle H \rangle\rangle_U)$ where $\langle\langle H \rangle\rangle_U$ denotes the minimal closed normal subgroup of $U$, containing $H$. Then $H$ is contained in a decomposition subgroup of a unique prime in $(S_f \sm S_p)(K_S)$.
\end{thm}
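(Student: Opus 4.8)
The plan is to produce, for the given subgroup $H$ of $p$-decomposition type, a prime $\bap \in (S_f \sm S_p)(K_S)$ whose decomposition group contains $H$, and then deduce uniqueness from Lemma \ref{lm:dec_intersec_with_GWT}. The overall strategy mirrors Neukirch's local correspondence, but the technical condition on $H$ replaces the role played in the classical setting by the knowledge that decomposition groups are full local Galois groups and that some rational prime is invertible on the scheme. First I would reduce to a statement over all finite subextensions of $K_S/K$: by Lemma \ref{lm:oxujet_dostali_suki}(ii), it suffices to find \emph{one} open subgroup $H_0 \subseteq H$ and one prime $\bap \in S_f$ with $H_0 \subseteq D_{\bap,p}$, since then automatically $H \subseteq D_{\bap}$, and $\bap \notin S_p$ follows from Lemma \ref{lm:not_p-adic_shit} together with Theorem \ref{thm:stable_MainThm_part_A} (the local field at a prime over $p$ would then contain the maximal pro-$p$ extension of every finite subfield, yet $\Gal$ of such a field contains no group of $p$-decomposition type).

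The core of the argument is to locate such a prime. For each finite subextension $K_S/L/K$, consider the image $H_L$ of $H$ in $\Gal_{L/K}$ and the fixed field $L^{H_L}$; I want to show that the primes of $L$ at which $H$ "is a decomposition group up to the relevant $p$-part" form a coherent system as $L$ grows, with nonempty inverse limit. Concretely, one passes to an open subgroup $U \supseteq H$ of $\Gal_{K,S}$, writes $\langle\langle H\rangle\rangle_U$ for the minimal closed normal subgroup of $U$ containing $H$, and uses the hypothesis $p^\infty \mid (U : \langle\langle H\rangle\rangle_U)$. The point of this divisibility is exactly that it lets one run the cohomological machinery: combined with Proposition \ref{prop:Sha2vanishingwoinverting} applied to $\cL$ the fixed field of $\langle\langle H\rangle\rangle_U$ (which is normal in the fixed field of $U$, has $p^\infty$ dividing the degree, and inherits $(\dagger)_p^{\rm rel}$ by \cite{IvStableSets} Lemma 2.8), the relevant $\Sha^2$ with $\bZ/p^r\bZ$-coefficients vanishes in the limit. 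A vanishing $\Sha^2$ is precisely a Hasse principle for embedding problems / for the local-global map on $H^2$, and this is what allows one to "realize" the abstract subgroup $H$ as sitting inside a genuine decomposition group at some finite level, i.e. to produce a prime $\bap$ of the fixed field with $H$ mapping into $D_{\bap}$ after restriction to a suitable open subgroup. One then checks the local field at $\bap$ is $p$-adic (not archimedean and not residue characteristic $p$, using Lemma \ref{lm:not_p-adic_shit} and that $H$ is non-abelian of cohomological dimension $2$) and that $H_0 := H \cap D_{\bap,p}$ is open in $H$, which feeds back into Lemma \ref{lm:oxujet_dostali_suki}(ii).

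Finally, uniqueness: if $H$ were contained in $D_{\bap_1}$ and in $D_{\bap_2}$ with $\bap_1 \neq \bap_2$ in $(S_f \sm S_p)(K_S)$, then since $H$ is of $p$-decomposition type it would contain an open subgroup of each $D_{\bap_i,p}$ (again by the explicit subgroup structure, \cite{IvLocCorBound} Lemma 2.2, and the fact that the $D_{\bap_i,p}$ are themselves of $p$-decomposition type, having noted via Theorem \ref{thm:stable_MainThm_part_A} that $\bap_i \notin S_p$), so $D_{\bap_1} \cap D_{\bap_2}$ would contain an open subgroup of $D_{\bap_1,p}$, contradicting $(D_{\bap_1,p} : D_{\bap_1,p}\cap D_{\bap_2,p}) = \infty$ from Lemma \ref{lm:dec_intersec_with_GWT}. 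I expect the main obstacle to be the second step: extracting an actual prime from the limiting vanishing of $\Sha^2$ — that is, packaging the cohomological input as a statement that an abstract $p$-decomposition subgroup satisfying the normalizer/index condition must be "local", and controlling that the prime produced lies in $S_f \sm S_p$ rather than somewhere on the boundary one has no control over. The technical condition $p^\infty \mid (U : \langle\langle H\rangle\rangle_U)$ is doing all the work here and verifying that it is exactly strong enough is the delicate point.
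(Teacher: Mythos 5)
Your plan reproduces the paper's proof in essentially the same way: reduce via Lemma \ref{lm:oxujet_dostali_suki}(ii) to finding one prime whose decomposition group contains an open subgroup of $H$, use the condition $p^{\infty}\mid(U:\langle\langle H\rangle\rangle_U)$ together with Proposition \ref{prop:Sha2vanishingwoinverting} (after invoking \cite{IvStableSets} Lemma 2.8) to obtain $\dirlim_{H\subseteq U}\Sha^2(U,\bZ/p\bZ)=0$, and get uniqueness from Lemma \ref{lm:dec_intersec_with_GWT} and the subgroup structure of groups of $p$-decomposition type. The extraction step you flag as delicate is settled exactly as you suspect: since $H$ is Demushkin of rank $2$, $\dirlim_{U}\coh^2(U,\bZ/p\bZ)\cong\coh^2(H,\bZ/p\bZ)\cong\bZ/p\bZ\neq 0$ injects into $\prod_{\fp\in S(M)}\coh^2(D_{\fp,K_S/M},\bZ/p\bZ)$ with $M=K_S^H$, so some local term is nonzero (non-archimedean because one may assume $K$ totally imaginary when $p=2$), and the proof is finished as in Neukirch's original argument (cf.\ \cite{NSW} 12.1.9).
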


\begin{proof} 
Uniqueness follows from Lemma \ref{lm:oxujet_dostali_suki} and \cite{IvLocCorBound} Lemma 2.2(ii). Let $H \subseteq \Gal_{K,S}$ be of $p$-decomposition type. By Lemma \ref{lm:oxujet_dostali_suki}(ii) it is enough to show only that an open subgroup of $H$ is contained in a decomposition group of a prime in $S_f \sm S_p$. Hence by \cite{IvStableSets} Lemma 2.8 we can assume that for all subextensions $K_S/\cL/K$, the pair $(S, \cL/K)$ is $(\dagger)_p^{\rm rel}$. Further, if $p=2$, by Theorem \ref{thm:stable_MainThm_part_A} $K_S$ is totally imaginary, hence we can assume that $K$ is totally imaginary in this case. For any $H \subseteq U \subseteq \Gal_{K,S}$ with last inclusion open consider the restriction map $\coh^2(U, \bZ/p\bZ) \rar \bigoplus_{\fp \in S(U)} \coh^2(D_{\fp,K_S/K_S^U}, \bZ/p\bZ)$. The main observation is that 
\[ \dirlim_{H \subseteq U \subseteq \Gal_{K,S}} \Sha^2(U, \bZ/p\bZ) = 0, \]
\noindent as by our assumption and by Proposition \ref{prop:Sha2vanishingwoinverting}, for any class $\alpha \in \Sha^2(U,\bZ/p\bZ)$ there is a subgroup $U \supseteq V \supseteq \langle\langle H \rangle\rangle_U \supseteq H$ with first inclusion open, such that the image of $\alpha$ in $\Sha^2(V,\bZ/p\bZ)$ is zero. Thus passing to the direct limit over all open $U$ containing $H$ we obtain an injection:

\[ \bZ/p\bZ \cong \coh^2(H, \bZ/p\bZ) \har \prod_{\fp \in S(M)} \coh^2(D_{\fp,K_S/M}, \bZ/p\bZ).\]

\noindent where $M := K_S^H$. Hence there is a prime $\fp \in S(M)$ with $\coh^2(D_{\fp,K_S/M}, \bZ/p\bZ) \neq 0$, this prime is non-archimedean, since $K$ is totally imaginary if $p=2$ and the proof can be finished as in the original paper of Neukirch \cite{Ne} Theorem 1 ( also cf. \cite{NSW} 12.1.9).
\end{proof}

\begin{cor}\label{cor:GS_det_decsgrs_intri} Let $K$ be a number field and $S$ a set of primes of $K$. Assume there is a rational prime $p$ such that the following hold:
\begin{itemize}
\item[(i)] $S \supseteq S_{\infty}$ is stable and satisfies $(\dagger)_p$
\item[(ii)] for each $\fp \in S_f$, we have $\mu_p \subseteq K_{S,\fp}$.
\end{itemize}
Then the group $G_{K,S}$ given together with the prime $p$ determines intrinsically the decomposition subgroups of primes in $S_f \sm S_p$.
\end{cor}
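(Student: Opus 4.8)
The plan is to deduce the corollary from Theorem \ref{thm:class_of_p_dec_subgroups} by showing that, under hypotheses (i) and (ii), the class of subgroups of $\Gal_{K,S}$ of $p$-decomposition type satisfying the technical condition in that theorem is precisely the class of subgroups that are open in a decomposition subgroup of some prime of $(S_f \sm S_p)(K_S)$; the assignment $H \mapsto \bap$ is then intrinsic, and taking normalizers recovers the decomposition subgroups themselves via Lemma \ref{lm:oxujet_dostali_suki}. So the work splits into two inclusions.

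First I would check that every decomposition subgroup $D_{\bap}$ with $\bap \in (S_f \sm S_p)(K_S)$ actually contains a subgroup $H$ of $p$-decomposition type satisfying the technical condition. By Theorem \ref{thm:stable_MainThm_part_A}, $K_{S,\bap} \supseteq K_{\bap}(p)$, so the image of $D_{\bap}$ in $\Gal_{K,S}$ has, after passing to a suitable open subgroup (a finite subextension of $K$ inside $K_S$, using the index argument from Lemma \ref{lm:dec_intersec_with_GWT}), a quotient isomorphic to $\Gal_{\lambda(p)/\lambda}$ for a $p$-adic field $\lambda$ not lying over $p$; since $\mu_p \subseteq K_{S,\bap}$ by hypothesis (ii), this group is a non-abelian Demushkin group of rank $2$, i.e.\ of $p$-decomposition type (using \cite{NSW} 3.9.9, 3.9.11 as recorded after Definition \ref{def:pdecsubgroups}). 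One then lifts a $p$-decomposition-type subgroup $H$ back into $D_{\bap}$. The technical condition $p^\infty \mid (U : \langle\langle H \rangle\rangle_U)$ for open $U \supseteq H$ is exactly the statement that the global group mod the normal closure of a local decomposition group is $p$-adically large; this is again a consequence of Theorem \ref{thm:stable_MainThm_part_A} together with the stability input, since $\langle\langle H \rangle\rangle_U$ is contained in the normal closure of $D_{\bap}$ in $U$, and killing one local group (or finitely many, over a finite base) cannot kill the pro-$p$ part — this is the same mechanism used in the proof of Lemma \ref{lm:dec_intersec_with_GWT}. Conversely, Theorem \ref{thm:class_of_p_dec_subgroups} gives that any $H$ of $p$-decomposition type satisfying the condition lies in a unique such $D_{\bap}$.

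Finally I would assemble the intrinsic description: given $\Gal_{K,S}$ and $p$, declare $\mathcal{H}$ to be the set of all closed subgroups of $p$-decomposition type satisfying the technical condition; for $H \in \mathcal{H}$ let $\bap(H)$ be the unique prime of $(S_f \sm S_p)(K_S)$ with $H \subseteq D_{\bap(H)}$ (Theorem \ref{thm:class_of_p_dec_subgroups}); then for a fixed prime $\bap$, the decomposition subgroup $D_{\bap}$ is recovered as $D_{\bap} = \bigcup \{ \N_{\Gal_{K,S}}(H_0) : H_0 \in \mathcal{H},\ \bap(H_0) = \bap,\ H_0 \lhd \text{ some } H \in \mathcal{H} \}$, or more cleanly as the union of the $H \in \mathcal H$ with $\bap(H) = \bap$ together with Lemma \ref{lm:oxujet_dostali_suki}(i) to see that normalizers do not leave $D_{\bap}$. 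Everything in this description refers only to the abstract group $\Gal_{K,S}$ and the prime $p$, which is what ``determines intrinsically'' means.

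The main obstacle I expect is the verification that a genuinely existing decomposition subgroup $D_{\bap}$, $\bap \in (S_f\sm S_p)(K_S)$, contains a $p$-decomposition-type subgroup that satisfies the technical condition in Theorem \ref{thm:class_of_p_dec_subgroups} — i.e.\ the forward direction. The rank-$2$ Demushkin structure of the local factor is clean given hypothesis (ii), but checking $p^\infty \mid (U : \langle\langle H \rangle\rangle_U)$ uniformly over all open $U \supseteq H$ requires care: one must control the normal closure of a local group inside arbitrary open subgroups of the global group, and see that the global pro-$p$ quotient (which the stability hypotheses make large, via Theorem \ref{thm:stable_MainThm_part_A} and Proposition \ref{prop:Sha2vanishingwoinverting}) is not exhausted by it. This is where the stability of $S$ is used essentially, just as in Lemma \ref{lm:dec_intersec_with_GWT}.
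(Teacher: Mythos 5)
Your first half runs parallel to the paper: hypothesis (ii) plus Theorem \ref{thm:stable_MainThm_part_A} shows that for $\bap \in (S_f \sm S_p)(K_S)$ the $p$-Sylow subgroup of $D_{\bap}$ is of $p$-decomposition type, and stability gives the divisibility condition $p^{\infty} \mid (U : \langle\langle H \rangle\rangle_U)$, so that Theorem \ref{thm:class_of_p_dec_subgroups} identifies the intrinsic class $\mathcal H$ with (subgroups of) the $p$-Sylow subgroups of the $D_{\bap}$. The gap is in your final reconstruction step. First, your recipe quantifies over ``$H_0 \in \mathcal H$ with $\bap(H_0) = \bap$'', but the assignment $H \mapsto \bap(H)$ and in particular its fibers are exactly what is not yet known to be group-theoretic: you need a purely group-theoretic criterion deciding when two subgroups in $\mathcal H$ lie in the same decomposition group. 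The paper supplies this via \cite{IvLocCorBound} Lemma 3.9, then reconstructs the finite-level sets $(S_f \sm S_p)(U)$ with their transition maps and the conjugation action of $\Gal_{K,S}$, and finally obtains $D_{\bap}$ as the stabilizer of a point of $\prolim_U (S_f\sm S_p)(U)$. Without such a criterion your description is circular: knowing the partition of $\mathcal H$ into fibers is essentially equivalent to knowing the collection $\{D_{\bap}\}$.

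Second, even granting the fibers, neither of your closing formulas recovers $D_{\bap}$. The union of the $H \in \mathcal H$ lying in $D_{\bap}$ consists only of pro-$p$ elements, so it misses everything in $D_{\bap}$ of nontrivial prime-to-$p$ part (unless $D_{\bap}$ happens to be pro-$p$). The union of normalizers $\bigcup \N_{\Gal_{K,S}}(H_0)$ is indeed contained in $D_{\bap}$ by Lemma \ref{lm:oxujet_dostali_suki}(i), but equality would require every $x \in D_{\bap}$ to normalize some $p$-decomposition-type subgroup $H_0 \subseteq D_{\bap}$, and this is unjustified: e.g.\ if $x$ lies in the image $W$ of wild inertia (a normal pro-$\ell$ subgroup of $D_{\bap}$, $\ell \neq p$ the residue characteristic) and normalizes a pro-$p$ subgroup $H_0$, then $[x,H_0] \subseteq W \cap H_0 = \{1\}$, so $x$ would have to centralize $H_0$ --- a much stronger condition that you have no argument for. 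In short, a single $H \in \mathcal H$ only gives you the subgroup $\N_{\Gal_{K,S}}(H) \subseteq D_{\bap}$, and passing from these to the full decomposition groups requires the fiber criterion and the stabilizer/inverse-limit argument of the paper (or a substitute), which your proposal does not provide.
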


\begin{proof} The proof works exactly as in \cite{IvLocCorBound} Section 3.4. For convenience we repeat it here (except for some technical details). First of all, since $\mu_p \subset K_{S,\fp}$ for any $\fp \in S_f$, it follows from Theorem \ref{thm:stable_MainThm_part_A} that for any $\bap \in (S_f \sm S_p)(K_S)$, the composition

\[ \cG_{\fp,p} \har \cG_{\fp} \tar D_{\bap} \har \Gal_{K,S} \]

\noindent is injective, or with other words, the $p$-Sylow subgroup of $D_{\bap}$ is of $p$-decomposition type. For any $U \subseteq \Gal_{K,S}$ open (and small enough) we claim the equality of the following subsets of the set of all subgroups of $p$-decomposition type inside $U$:
\[ \Syl_p(U,S_f \sm S_p) =  \left\{ H \subseteq U \colon \begin{aligned} &H \text{ is a subgroup of $p$-decomposition type satisfying}  \\ &
\text{the technical condition in Theorem \ref{thm:class_of_p_dec_subgroups}} \\ & \text{and maximal of this type}  \end{aligned} \right\}, \]

\noindent where $\Syl_p(U,S_f \sm S_p)$ denotes the set of all $p$-Sylow subgroups of decomposition subgroups of primes in $S_f \sm S_p$ of the field $K_S^U$. Indeed, Theorem \ref{thm:class_of_p_dec_subgroups} assures that any group in the right set is contained in a decomposition group of a prime in $S_f \sm S_p$, and by maximality it has to be a $p$-Sylow subgroup. Conversely,  any group $H$ lying in the left set is of $p$-decomposition type, satisfies the technical property from Theorem \ref{thm:class_of_p_dec_subgroups} (by Lemma \ref{lm:dec_intersec_with_GWT}) and is maximal with these properties. Indeed, to prove maximality assume $H \subseteq H^{\prime}$ with $H^{\prime}$ of $p$-decomposition type. This inclusion has to be open by \cite{IvLocCorBound} Lemma 2.2(ii) and thus by Lemma \ref{lm:oxujet_dostali_suki}(ii), $H^{\prime}$ is contained in the same decomposition group as $H$. Since both are pro-$p$-groups and $H$ is a $p$-Sylow subgroup, we get $H = H^{\prime}$, proving the maximality of $H$. 

Thus the data $(\Gal_{K,S},p)$ determine intrinsically the set $\Syl_p(U,S_f \sm S_p)$ for any $U \subseteq \Gal_{K,S}$ open. $U$ acts on this set by conjugation. We have an $U$-equivariant surjection 

\[ \psi \colon \Syl_p(U, S_f \sm S_p) \tar (S_f \sm S_p)(U) \]

\noindent ($U$ acts trivially on the right side), which sends $H$ to the prime $\bap|_L$, such that $H \subseteq D_{\bap,K_S/L}$, where $L := K_S^U$. By \cite{IvLocCorBound} Lemma 3.9 we have a purely group theoretical criterion for two elements on the left side to lie in the same fiber of this surjection, which allows us to reconstruct the set $(S_f \sm S_p)(U)$. For any inclusion $V \har U$ of open subgroups of $\Gal_{K,S}$, we have (a priori non-canonical) maps $\Syl_p(V, S_f \sm S_p) \rar \Syl_p(U, S_f \sm S_p)$, and via $\psi$ they induce the restriction-of-primes maps $(S_f \sm S_p)(V) \rar (S_f \sm S_p)(U)$. Finally, if $U \subseteq \Gal_{K,S}$ is normal, the $\Gal_{K,S}$-action by conjugation on $\Syl_p(U, S_f \sm S_p)$ induces via $\psi$ the natural $\Gal_{K,S}$-action on $(S_f \sm S_p)(U)$ by permuting the primes. In this way we have reconstructed the projective system of $\Gal_{K,S}$-sets $\{ (S_f \sm S_p)(U) \colon U \subseteq U_0, U \triangleleft \Gal_{K,S} \}$, where $U_0 \subseteq \Gal_{K,S}$ is some open subgroup. Now the decomposition subgroups of primes in $S_f \sm S_p$ are exactly the stabilizers in $\Gal_{K,S}$ of elements in the $\Gal_{K,S}$-set $\prolim_{U \subseteq U_0, U \triangleleft \Gal_{K,S}} (S_f \sm S_p)(U)$. \qedhere
\end{proof}

\begin{rem}\mbox{}
\begin{itemize}
\item[(i)] It is remarkable that Theorem \ref{thm:class_of_p_dec_subgroups} and Corollary \ref{cor:GS_det_decsgrs_intri} hold even if $\bN(S) = \{1\}$, i.e, if there is no rational prime invertible on $\Spec \caO_{K,S}$. Indeed, there are examples of stable sets with arbitrarily small density satisfying $(\dagger)_p$ for all $p$ and $\bN(S) = \{1\}$ (\cite{IvStableSets} Section 3.4)
\item[(ii)] Observe that $(\dagger)_2$ is equivalent to being $2$-stable. In particular, the assumptions of Corollary \ref{cor:GS_det_decsgrs_intri} in the case $p = 2$ reduce to '$S$ is $2$-stable and $S \supseteq S_{\infty}$'.
\end{itemize}
\end{rem}

%************************************************************************************************************************************************************
%************************************************************************************************************************************************************

\subsection{Local correspondence at the boundary}

\begin{Def} For $i=1,2$, let $K_i$ be a number field, $S_i$ a set of primes of $K_i$ and let 

\[ \sigma \colon \Gal_{K_1, S_1} \stackrel{\sim}{\rar} \Gal_{K_2,S_2} \] 

\noindent be a (topological) isomorphism. If $U_1$ is a closed subgroup of $\Gal_{K_1, S_1}$ with fixed field $L_1$, we write $U_2$ for $\sigma(U_1)$ and $L_2$ for its fixed field, etc. We say that the \textbf{local correspondence at the boundary} holds for $\sigma$, if the following conditions are satisfied: 

\begin{itemize}
 \item[(i)] For $i = 1,2$, there is a finite exceptional set $S_i^{\rm ex} \subseteq S_i$, such that for any $\bap_1 \in (S_{1,f} \sm S_1^{\rm ex})(K_{1,S_1})$, there is a unique prime $\sigma_{\ast}(\bap_1) \in (S_{2,f} \sm S_2^{\rm ex})(K_{2,S_2})$, with $\sigma(D_{\bap_1}) = D_{\sigma_{\ast}(\bap_1)}$, such that $\sigma$ induces a bijection

\[ \sigma_{\ast} \colon (S_{1,f} \sm S_1^{\rm ex})(K_{1,S_1}) \stackrel{\sim}{\longrightarrow} (S_{2,f} \sm S_2^{\rm ex})(K_{2,S_2}) \]

\noindent which is Galois-equivariant, i.e.,
\[\sigma_{\ast}(g \bap_1) = \sigma(g)\sigma_{\ast}(\bap_1) \]

\noindent for each $g \in \Gal_{K_1,S_1}$ and $\bap_1 \in S_{1,f}(K_{1,S_1})$. In particular, for any finite subextension $L_1$ of $K_{1,S_1}/K_1$ with corresponding open subgroup $U_1 \subseteq \Gal_{K_1,S_1}$, if two primes $\bap_1$, $\baq_1 \in S_{1,f}(K_{1,S_1})$ restrict to the same prime of $L_1$, then also $\sigma_{\ast}(\bap_1), \sigma_{\ast}(\baq_1)$ restrict to the same prime of $L_2$, and hence $\sigma_{\ast}$ induces a bijection 

\[ \sigma_{\ast,U_1} \colon (S_{1,f} \sm S_1^{\rm ex})(L_1) \stackrel{\sim}{\longrightarrow} (S_{2,f} \sm S_2^{\rm ex})(L_2). \]

\noindent 

\item[(ii)] For all $K_{1,S_1}/L_1/K_1$ finite with corresponding subgroup $U_1 \subseteq \Gal_{K_1,S_1}$ and for all but finitely many primes $\fp_1 \in (S_{1,f} \sm S_1^{\rm ex})(L_1)$, the residue characteristics and the local degrees of $\fp_1$ and $\sigma_{\ast, U_1}(\fp_1)$ are equal. 
\end{itemize}
\end{Def}

\begin{cor}\label{cor:loccorforstableingeneral}
For $i =1,2$, let $K_i$ be a number field and $S_i$ a stable set of primes. Assume that $K_{i,S_i}$ is totally imaginary and that $S_i$ satisfies $(\dagger)_p$ for almost all rational primes $p$ and in particular for $p=2$. Let 
\[ \sigma \colon \Gal_{K_1,S_1} \stackrel{\sim}{\longrightarrow} \Gal_{K_2,S_2} \] 
\noindent be an isomorphism. Then the local correspondence at the boundary holds for $\sigma$ and moreover, one can choose $S_i^{\rm ex}$ to be the set of 2-adic primes in $S_i$. More precisely, for any $U_1 \subseteq \Gal_{K_1,S_1}$, $\sigma_{\ast,U_1}$ preserves the residue characteristic and the absolute degree of all primes $\fp \in S_1(U_1)$, whose residue characteristic $\ell$ is odd and such that $S_i$ satisfies $(\dagger)_{\ell}$ for $i=1,2$. 
% Moreover, for any open $U_1 \subseteq \Gal_{K_1,S_1}$ with corresponding field $L_1$, $\sigma_{\ast,U_1}$ preserves the residue characteristics and absolute degrees of all primes $\fp \in S_{1,f}(L_1)$, which do not lie over rational primes contained in the (finite) set $E^{\rm stab}(S_1) \cup E^{\rm stab}(S_2)$.
\end{cor}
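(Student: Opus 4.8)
The plan is to deduce condition (i) of the local correspondence from Corollary \ref{cor:GS_det_decsgrs_intri} applied at the prime $p=2$, and to obtain condition (ii) --- together with the sharper statement on residue characteristics and absolute degrees --- by a local analysis of decomposition groups at the odd primes $\ell$ with $(\dagger)_\ell$. For (i): put $S_i^{\rm ex}$ equal to the set of $2$-adic primes of $S_i$ and, after enlarging $S_i$ by its archimedean primes (harmless, as $K_{i,S_i}$ is totally imaginary and only $S_{i,f}$ enters the statement), invoke Corollary \ref{cor:GS_det_decsgrs_intri} with $p=2$; its hypotheses hold because $\mu_2\subseteq K_{i,S_i,\fp}$ is automatic and $(\dagger)_2$ is equivalent to $2$-stability, which is assumed. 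Hence $(\Gal_{K_i,S_i},2)$ determines intrinsically the decomposition subgroups of primes in $S_{i,f}\sm S_{i,2}$ and, more precisely, the entire projective system of $\Gal_{K_i,S_i}$-sets $\{(S_{i,f}\sm S_{i,2})(U)\}_U$ with its restriction maps and conjugation action. Transporting this structure through $\sigma$ yields a Galois-equivariant bijection $\sigma_{\ast}$ on $(S_{i,f}\sm S_i^{\rm ex})(K_{i,S_i})$ with $\sigma(D_{\bap_1})=D_{\sigma_{\ast}(\bap_1)}$, compatible with restriction to finite levels; this is condition (i).

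For the residue characteristic, fix an odd rational prime $\ell$ with $(\dagger)_\ell$ for both $S_1$ and $S_2$; all but finitely many $\ell$ are of this kind, and for a fixed $L_1$ only finitely many primes of $L_1$ lie over the excluded rational primes (those equal to $2$ or failing $(\dagger)$), so it suffices to treat primes over such an $\ell$. The key invariant is this: for $\bap\in(S_{i,f}\sm S_{i,2})(K_{i,S_i})$ lying over a prime $\fp$ of a finite subextension $K_{i,S_i}/L_i/K_i$, the dimension $\dim_{\bQ_\ell}\bigl(D_{\bap}^{\rm ab}\hat\otimes\bQ_\ell\bigr)$ equals $[(L_i)_{\fp}:\bQ_\ell]+1$ if $\fp\mid\ell$ and $1$ otherwise. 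The upper bound is local class field theory ($D_{\bap}^{\rm ab}$ is a quotient of $\widehat{(L_i)_{\fp}^{\times}}$, whose pro-$\ell$ part has $\bZ_\ell$-rank $[(L_i)_{\fp}:\bQ_\ell]+1$ resp.\ $1$); the lower bound comes from Theorem \ref{thm:stable_MainThm_part_A}, which --- using $(\dagger)_\ell$, descended to $L_i$ via \cite{IvStableSets} Lemma 2.8 --- gives $K_{i,S_i,\bap}\supseteq(L_i)_{\fp}(\ell)$, so that $D_{\bap}$ surjects onto $\Gal_{(L_i)_{\fp}(\ell)/(L_i)_{\fp}}$, which already realizes that rank on its abelianization. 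Since $\sigma$ restricts to an isomorphism of the relevant decomposition groups, this dimension is $\sigma_{\ast}$-invariant, so $\fp\mid\ell\iff\sigma_{\ast,U_1}(\fp)\mid\ell$; thus residue characteristics are preserved, and so are local degrees $[(L_1)_{\fp}:\bQ_\ell]$ (the invariant minus $1$), which is condition (ii).

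It remains to recover the absolute degree of a prime $\fp_1$ of $L_1$ over $\ell$, i.e.\ the exponent $f$ in $q:=|\kappa(\fp_1)|=\ell^{f}$, via the $p$-Sylow subgroups of its decomposition group. For every rational prime $p\neq\ell$ with $(\dagger)_p$ and $p\mid q-1$ (so that $\mu_p\subseteq(L_1)_{\fp_1}$), and also for $p=2$ (where no congruence is needed), the argument from the proof of Corollary \ref{cor:GS_det_decsgrs_intri}, combined with Theorem \ref{thm:stable_MainThm_part_A}, shows that the $p$-Sylow subgroup of $D_{\bap_1,K_{1,S_1}/L_1}$ is isomorphic to the local group $\cG_{\fp_1,p}\cong\bZ_p\ltimes\bZ_p$ in which the Frobenius generator acts on the inertia factor by multiplication by $q$. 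Its isomorphism type therefore records the closed subgroup $\overline{\langle q\rangle}\subseteq\bZ_p^{\times}$, hence $v_p(q-1)$. Letting $p$ range over this cofinite set of rational primes recovers the integer $q$ --- by a Zsygmondy-type argument, since $(\dagger)_p$ holds for almost all $p$ --- and, $\ell$ being already known, the absolute degree $f$ itself. All the Sylow subgroups involved are transported by $\sigma$ and $\sigma_{\ast,U_1}$, so the absolute degree is preserved as well.

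I expect the main obstacle to be the rank computation of the second paragraph: one must be sure that the decomposition quotient $D_{\bap}^{\rm ab}$ attains the \emph{full} $\bZ_\ell$-rank predicted by local class field theory rather than a proper one --- this is precisely where the realization statement Theorem \ref{thm:stable_MainThm_part_A}, hence the stability hypotheses, is indispensable, whereas in the classical dense setting $D_{\bap}$ equals the full local Galois group $\Gal_{K_\fp}$ and the point is automatic. A secondary, more bookkeeping-type difficulty is the elementary argument extracting $f$ from the Frobenius-twist data, which genuinely uses that $(\dagger)_p$ holds for almost all $p$: in contrast to Neukirch's proof, no single auxiliary prime suffices.
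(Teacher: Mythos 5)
Your first two paragraphs prove the corollary and follow essentially the paper's route. Condition (i) is obtained, exactly as in the paper, from Corollary \ref{cor:GS_det_decsgrs_intri} at $p=2$ with $S_i^{\rm ex}$ the $2$-adic primes (your observation that $K_{i,S_i}$ totally imaginary forces the real places of $K_i$ into $S_i$, so that adjoining $S_\infty$ changes nothing, is a point the paper leaves implicit). For condition (ii) the paper invokes Lemma \ref{lm:someanablocalinform}, detecting $\ell$ by the unboundedness of $\dim_{\bF_p}\coh^1(V,\bZ/p\bZ)$ over open $V \subseteq D$ of $p$-power index and the degree by the Euler characteristic $\chi_\ell(D(\ell),\bZ/\ell\bZ)$, whereas you use the $\bZ_\ell$-rank of $D_{\bap}^{\rm ab}$, bounded above by local class field theory and below by Theorem \ref{thm:stable_MainThm_part_A} (descended to finite levels via \cite{IvStableSets} Lemma 2.8). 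Both variants rest on the same key input, namely that $(\dagger)_\ell$ forces $K_{i,S_i}$ to realize the maximal pro-$\ell$-extension at the primes in question, so this is a cosmetic difference and your invariant does the job.

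One caveat: your third paragraph is both unnecessary and the only shaky part. In this paper ``absolute degree'' means the degree $[(L_1)_{\fp}:\bQ_\ell]$ of the completion over $\bQ_\ell$ --- this is exactly what Lemma \ref{lm:someanablocalinform} recovers and all that is used later (cf.\ \eqref{eq:eq_of_degs}) --- and you already recover it in your second paragraph; the residue degree $f$ is not claimed. Moreover the sketch you give for $f$ does not obviously close: on the $\sigma_{\ast}(\fp_1)$-side you do not know a priori that $\mu_p$ lies in the local field, so the $p$-Sylow subgroup there records a valuation of the form $v_p\bigl(q_2^{d}-1\bigr)$ for an unknown multiplier $d$ rather than $v_p(q_2-1)$, and recovering $q$ from such data for only almost all $p$ is unjustified, since the Zsygmondy-type prime you would need may fall into the excluded finite set where $(\dagger)_p$ fails. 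Simply delete that paragraph; the first two suffice.
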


\begin{proof} To avoid notational problems, let us exceptionally denote by $S_{\text{2-adic}}(L)$ the set of 2-adic primes of a number field $L$. We apply Corollary \ref{cor:GS_det_decsgrs_intri} to $(K_i,S_i, p=2)$ for $i=1,2$. It shows that $\sigma$ maps decomposition groups of primes in $S_{1,f} \sm S_{2-adic}$ to decomposition groups of primes in $S_{2,f} \sm S_{2-adic}$. Thus we can define $\sigma_{\ast}(\bap_1)$ by the equality
\[ D_{\sigma_{\ast}(\bap_1)} = \sigma(D_{\bap_1}). \]

\noindent The Galois-equivariance of $\sigma_{\ast}$ is straightforward. It remains to show that for any finite subextension $K_{1,S_1}/L_1/K_1$ with corresponding open subgroup $U_1$ and for any $\fp_1 \in S_{1,f}(L_1)$ with residue characteristic $\ell$, which is odd and such that $S_i$ satisfies $(\dagger)_{\ell}$ for $i = 1,2$, the map $\sigma_{\ast, U_1}$ preserves the residue characteristic and the local absolute degree. For any such $\ell$ and any $\bap \in S_{1,f}(K_{1,S_1})$ with residue characteristic $\ell$, Theorem \ref{thm:stable_MainThm_part_A} implies that the maximal $\ell$-extension of $K_{i,\fp}$ is realized by $K_{i,S_i}$. Let $\fp \in S_i(L_i)$ be a prime with residue characteristic $\ell$ and $\bap$ an extension to $K_{i,S_i}$. Lemma \ref{lm:someanablocalinform} shows that $D_{\bap,K_S/L_i}$ encodes the information about the residue characteristic and the absolute degree of $\fp$. Thus $\sigma_{\ast, L_1}$ preserves the residue characteristic and the absolute degree of all primes in $S_{1,f}$ with residue characteristic $\ell$ being odd and such that $S_i$ satisfies $(\dagger)_{\ell}$ for $i=1,2$.
\end{proof}

\begin{lm} \label{lm:someanablocalinform}
Let $\kappa$ be a local field with characteristic zero and some residue characteristic $\ell$ and $\lambda/\kappa$ a Galois extension with Galois group $D$, which contains the maximal pro-$\ell$-extension of $\kappa$. Then $D$ encodes the information about $\ell$ and $[\kappa:\bQ_{\ell}]$.
\end{lm}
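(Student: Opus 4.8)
The plan is to recover the residue characteristic $\ell$ and the absolute degree $[\kappa:\bQ_\ell]$ purely from the profinite group $D = \Gal_{\lambda/\kappa}$, using the fact that $\lambda$ contains the maximal pro-$\ell$ extension $\kappa(\ell)$ of $\kappa$, so that $D(\ell) = \Gal_{\kappa(\ell)/\kappa} = \Gal_\kappa(\ell)$ is the maximal pro-$\ell$ quotient of the full local Galois group.

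First I would isolate $\ell$ among all primes. The key structural input is the classification of $\Gal_\kappa(\ell)$ for a $p$-adic field $\kappa$ with residue characteristic $\ell$ (see \cite{NSW} 7.5.11): if $\mu_\ell \not\subseteq \kappa$ then $\Gal_\kappa(\ell)$ is a free pro-$\ell$ group of rank $[\kappa:\bQ_\ell]+1$; if $\mu_\ell \subseteq \kappa$ it is a Demushkin group of rank $[\kappa:\bQ_\ell]+2$. By contrast, for a prime $q \neq \ell$, the maximal pro-$q$ quotient of $\Gal_\kappa$ has rank at most $2$ (it is either pro-cyclic or of $p$-decomposition type, i.e. $\bZ_q \rtimes \bZ_q$). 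So I would compute, for each prime $q$, the minimal number of generators of $D(q)$ — which is intrinsic to $D$ — and observe that since $[\kappa:\bQ_\ell] \geq 1$ we always have $\mathrm{rank}\, D(\ell) \geq 2$, with equality forced only in the borderline case $[\kappa:\bQ_\ell]=1$, $\mu_\ell \not\subseteq\kappa$. The residue characteristic $\ell$ is then characterized as: the unique prime $q$ for which $D(q)$ is either free of rank $\geq 3$, or Demushkin of rank $\geq 3$, or (the exceptional small case) free of rank $2$ — distinguishing this last sub-case from a $q\neq\ell$ contribution requires noting that the $p$-decomposition-type groups occurring for $q\neq\ell$ are non-abelian Demushkin of rank $2$, hence never free. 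Thus "$D(q)$ is free pro-$q$ of rank $\geq 2$, or is a Demushkin group of rank $\geq 3$" singles out $q=\ell$. Once $\ell$ is known, the absolute degree is read off: $[\kappa:\bQ_\ell] = \mathrm{rank}\,D(\ell) - 1$ if $D(\ell)$ is free, and $= \mathrm{rank}\,D(\ell) - 2$ if $D(\ell)$ is Demushkin, and one distinguishes the two cases by whether $D(\ell)$ has nontrivial $H^2$ (equivalently, whether it is a Poincaré duality group of dimension $2$).

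I expect the main obstacle to be handling the exceptional small cases cleanly: when $[\kappa:\bQ_\ell]=1$ the rank of $D(\ell)$ can be as low as $2$, which a priori collides with the rank-$2$ groups $D(q)$ coming from primes $q\neq\ell$, so one must invoke the free-versus-non-split-Demushkin dichotomy to separate them; and one should double-check the edge case $\ell=2$ (where Demushkin groups have their own subtleties, and $\mu_2\subseteq\kappa$ always). A secondary point is that strictly speaking $D$ is only the Galois group of $\lambda/\kappa$, not of $\bar\kappa/\kappa$; but since the passage $D \rightsquigarrow D(q)$ factors through $\Gal_\kappa(q)$ by the hypothesis that $\lambda \supseteq \kappa(\ell)$ (and more generally the tame part is visible), the pro-$q$ quotients we use are exactly those of the full local group, so no information is lost. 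Apart from this bookkeeping, the argument is a direct transcription of the corresponding local reconstruction in \cite{IvLocCorBound} (cf. also \cite{NSW} 12.1.7), and I would simply cite that computation for the routine verifications.
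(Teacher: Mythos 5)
Your proposal is correct in substance, but it takes a genuinely different route from the paper. The paper never invokes the free/Demushkin classification of $\cG_\kappa(\ell)$ from \cite{NSW} 7.5.11: it detects $\ell$ by an unboundedness criterion, namely that $\dim_{\bF_p} H^1(V,\bZ/p\bZ)$ is unbounded as $V$ runs over the open subgroups of $D$ with $D/V$ a $p$-group exactly when $p=\ell$ (for $p\neq\ell$ these dimensions stay bounded by $2$, while for $p=\ell$ the generator rank grows with the degree of the corresponding subfield), and it then recovers $[\kappa:\bQ_\ell]$ in one stroke as an Euler characteristic of $D(\ell)=\cG_\kappa(\ell)$ with $\bZ/\ell$-coefficients, which is insensitive to whether that group is free or Demushkin. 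This sidesteps your entire case analysis (free versus Demushkin, the borderline rank-$2$ case $\kappa=\bQ_\ell$ with $\ell$ odd, the $\ell=2$ subtleties, and the $H^2$-test), at the cost of having to control $H^1$ of all subgroups of $\ell$-power index rather than just the top quotient. Your version is more self-contained on the degree side, since everything is read off the single group $D(\ell)$; both versions use the same key identification $D(\ell)=\cG_\kappa(\ell)$, which holds because every pro-$\ell$ quotient of $\cG_\kappa$ factors through $\Gal_{\kappa(\ell)/\kappa}$ and $\lambda\supseteq\kappa(\ell)$.

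One justification in your write-up is inaccurate, although your criterion survives. For $q\neq\ell$ it is not true that ``the pro-$q$ quotients we use are exactly those of the full local group'': the hypothesis only gives $\lambda\supseteq\kappa(\ell)$, so for instance $\lambda=\kappa(\ell)$ yields $D(q)=1$, and in general $D(q)$ is merely some quotient of $\cG_\kappa(q)$ (your parenthetical claim that the tame part is visible is unjustified). Hence you may not argue that the groups occurring for $q\neq\ell$ are exactly pro-cyclic or of $q$-decomposition type. The repair is immediate: for $q\neq\ell$ the group $\cG_\kappa(q)$ is metabelian of generator rank at most $2$, hence so is every quotient $D(q)$; such a group can be neither free pro-$q$ of rank $\geq 2$ (free pro-$q$ groups of rank $\geq 2$ are not solvable) nor of rank $\geq 3$, so your characterization of $\ell$ still singles out the correct prime. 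With that sentence inserted, your argument is complete.
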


\begin{proof}
Let $\cG_{\kappa}$ be the absolute Galois group of $\kappa$. We have a surjection $\pi \colon \cG_{\kappa} \tar D$, and for any open $U \subseteq \cG_{\kappa}$ with $\cG_{\kappa}/U$ an $\ell$-group, surjections $U \tar \pi(U) \tar U^{(\ell)}$, which for any rational prime $p$ induce injections 
\[ \coh^1(U^{(\ell)}, \bZ/p\bZ) \har \coh^1(\pi(U), \bZ/p\bZ) \har \coh^1(U, \bZ/p\bZ).\] 

\noindent For all primes $p \neq \ell$, the $\bF_p$-dimension of the space on the right (and hence also in the middle) is bounded by $2$, and for $p = \ell$, the dimension of the space on the left gets arbitrary big, if $U$ gets arbitrary small among all subgroups $U \subseteq \cG_{\kappa}$ such that $\cG_{\kappa}/U$ is an $\ell$-group. Thus the residue characteristic $\ell$ is equal to the unique prime $p$, such that $\dim_{\bF_p} \coh^1(V,\bZ/p\bZ)$ is unbounded as $V$ varies over all open subgroups of $D$, such that $D/V$ is a $p$-group. Further, 
\[ [\kappa:\bQ_{\ell}] = \chi_{\ell}(\cG_{\kappa}(\ell),\bZ/\ell\bZ) = \chi_{\ell}(D(\ell),\bZ/\ell\bZ). \qedhere \]
\end{proof}

\begin{rem}
The proofs (of Theorem \ref{thm:class_of_p_dec_subgroups}, Corollary \ref{cor:GS_det_decsgrs_intri} and Corollary \ref{cor:loccorforstableingeneral}) would be less technical if one would assume the stronger condition: '$K_S$ realize the maximal local extension at each $\fp \in S_f$' on the involved stable sets $S$. It is satisfied in the following cases.
\begin{itemize}
\item[(i)]  If $S$ is defined over a totally real subfield, (not necessarily stable), and $S \supseteq S_{\infty} \cup S_{p_1} \cup S_{p_2}$ for two different rational primes $p_1,p_2$ (by \cite{CC} Remark 5.3(i)). 
\item[(ii)]  If $S$ is stable and satisfies $(\dagger)_p$ for all rational $p$'s (by \cite{IvStableSets}).
\item[(iii)] If $S$ contains an almost Chebotarev set (by \cite{IvInfDens}). 
\end{itemize}
We conjecture that it is true in general if $S$ is stable.
\end{rem}

\section{Anabelian geometry of curves $\Spec \caO_{K,S}$ with $S$ stable}\label{sec:anabgeomofstablesets}

%************************************************************************************************************************************************************
%************************************************************************************************************************************************************

\subsection{Uniform bound}\label{sec:uniform_bound} 

Besides the local correspondence on the boundary, the following argument plays a central role in the proof of Theorem \ref{thm:NU_anab_without_dec_for_stable}. From now on, we consider all occurring fields to be subfields of a fixed algebraic closure $\overline{\bQ}$ of $\bQ$. 

\begin{prop}[Uniform bound]\label{lm:uniform_bound} For $i=1,2$, let $K_i$ be a number field, $S_i$ a set of primes of $K_i$ and let

\[ \sigma \colon \Gal_{K_1, S_1} \stackrel{\sim}{\rar} \Gal_{K_2,S_2} \] 

\noindent be an isomorphism. Assume that the local correspondence at the boundary holds. Assume that $S_1$ is stable. Then there is some $N > 0$, such that for all (not necessarily finite) intermediate subfields $K_{1,S_1}/M_1/K_1$, such that $M_1$ is normal over $\bQ$, one has $[M_1 : M_1 \cap M_2] < N$, where $M_2/K_2$ corresponds to $M_1/K_1$ via $\sigma$.
\end{prop}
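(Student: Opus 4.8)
The plan is to derive the bound from the stability of $S_1$ applied on the infinite extension $M_1/K_1$, using the local correspondence to control how primes of $S_1$ behave in $M_1\cap M_2$. First I would fix a $\lambda$-stabilizing subextension $K_{1,S_1}/L_0/K_1$ for $S_1$: there is a subset $S_0\subseteq S_1$ and $a\in(0,1]$ with $\lambda a > \delta_L(S_0)\geq a$ for all finite $K_{1,S_1}/L/L_0$. Since we are allowed to enlarge $K_1$ inside $K_{1,S_1}$ by a finite step (this changes $[M_1:M_1\cap M_2]$ only by a bounded factor), I may assume $L_0=K_1$ and moreover, using the local correspondence, throw away the finitely many exceptional primes, so that $\sigma_\ast$ identifies the relevant primes of $S_1$ with those of $S_2$ compatibly with restriction to finite levels.

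The heart of the argument is the following dichotomy. Consider the (possibly infinite) field $M_1\cap M_2$; its corresponding subgroup is $\langle U_1, U_2\rangle$ where $U_i$ corresponds to $M_i$. Suppose, for contradiction, that $[M_1:M_1\cap M_2]$ is unbounded as $M_1$ varies. Then for any $n$ I can find such an $M_1$ with a finite subextension $(M_1\cap M_2)/F/K_1$ and a finite subextension $M_1/F'/F$ with $[F':F\cap M_2]\geq n$ while $F'\subseteq M_1$. Now a prime $\fp$ of $F$ lying in $S_0$ splits in $M_1$ exactly the way it splits in $M_2$ after transport by $\sigma_\ast$, because $\sigma$ carries $D_{\bap}$ to $D_{\sigma_\ast(\bap)}$ and carries $U_1$ to $U_2$; hence the splitting behavior of $\fp$ in $F'$ is constrained: the primes of $M_1\cap M_2$ above $\fp$ and their decomposition in $M_1$ are ``visible'' simultaneously in $M_2$. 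Quantitatively, counting primes of $S_0$ at the finite level $F'$ versus at the level $F'\cap M_2$ (which is a subfield of $M_2$, hence visible via $\sigma_\ast$), the density $\delta_{F'}(S_0)$ and the density $\delta_{F'\cap M_2}(S_0)$ differ by a factor governed by how $S_0$-primes split in $F'/(F'\cap M_2)$; but by stability both densities lie in $[a,\lambda a)$, so that splitting factor is $<\lambda/1=\lambda$ on average, which — combined with the fact that $F'/(F'\cap M_2)$ is a nontrivial extension of large degree through which $S_0$-primes must, on the nose, not split (by the $\sigma_\ast$-compatibility, an $S_0$-prime of $F'\cap M_2$ that became inert or split would already be seen on the $M_2$ side) — forces $[F':F'\cap M_2]$ to be bounded in terms of $\lambda$ alone.

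The main obstacle I anticipate is making the ``splitting factor'' comparison rigorous: one must show that stability, which only controls Dirichlet densities of $S_0$ at finite levels inside $K_{1,S_1}/K_1$, still controls them at the levels $F'$ and $F'\cap M_2$ lying in the a priori different tower $M_1$ (respectively its intersection with $M_2$), and that the local correspondence genuinely synchronizes decomposition on the two sides rather than just matching decomposition groups abstractly. The clean way to handle this is: (i) use \cite{IvStableSets} Lemma 2.8 to propagate $(\dagger)_p^{\rm rel}$ and the stabilizing property down every finite subextension of $M_1/K_1$, so $\delta_{F'}(S_0)\in[a,\lambda a)$; (ii) observe that $F'\cap M_2$ corresponds to a subgroup containing $U_1$-conjugates appropriately, so $\delta_{F'\cap M_2}(S_0)$ is also pinned in $[a,\lambda a)$ by the same stability statement; (iii) for a prime $\fp\in S_0$ of $F'\cap M_2$ that is unramified and splits completely in neither direction, use $\sigma_\ast$ on the decomposition group to conclude its splitting in $F'$ equals its splitting in the $\sigma$-image, which lies inside $M_2$ and hence is trivial over $F'\cap M_2$ — so in fact $S_0$-primes of $F'\cap M_2$ split completely in $F'$, giving $\delta_{F'}(S_0)=[F':F'\cap M_2]\cdot\delta_{F'\cap M_2}(S_0)$; together with the two-sided bound $[a,\lambda a)$ this yields $[F':F'\cap M_2]<\lambda$, i.e. $N:=\lceil\lambda\rceil$ (times the bounded correction from the initial finite base change) works. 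I expect step (iii) — the exact synchronization of splitting through the local correspondence, including the harmless role of the finitely many exceptional and ramified primes — to be where the real care is needed.
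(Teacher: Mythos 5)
There is a genuine gap at the heart of your step (iii), and it is exactly the point the paper's proof is designed to get around. The local correspondence only matches decomposition subgroups inside the abstract groups $\Gal_{K_1,S_1}$ and $\Gal_{K_2,S_2}$ (Galois-equivariantly, preserving residue characteristic and degree). It gives no field-theoretic link between $M_1$ and $M_2$ as subfields of $\overline{\bQ}$: the intersection $M_1\cap M_2$ (or $F'\cap M_2$) is not the fixed field of any subgroup canonically visible to $\sigma$ on either side, so ``the splitting of an $S_0$-prime of $F'\cap M_2$ in $F'$ is already seen on the $M_2$ side'' has no justification, and the conclusion that $S_0$-primes of $F'\cap M_2$ split completely in $F'$ is simply not available. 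Relatedly, your step (ii) is unfounded: stability controls $\delta_L(S_0)$ only for finite $L$ with $K_{1,S_1}/L/L_0$, whereas $F'\cap M_2$ need not contain $K_1$ (or $L_0$) at all, so neither the definition of ``$\delta_{F'\cap M_2}(S_0)$'' nor the bound $[a,\lambda a)$ applies there. A further warning sign is that your argument never uses the hypothesis that $M_1$ is normal over $\bQ$, which is essential for any argument that relates the two sides through the residue characteristic.

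For comparison, the paper's proof makes the bridge between the two sides arithmetic rather than group-theoretic: by stability (\cite{IvStableSets} Prop.\ 2.6) there is $N$ with $\delta_{L_1}(S_1)>N^{-1}$ at every finite level; one reduces to $M_1/K_1$ finite, and sets $S_1':=S_1(M_1)\cap \cs(M_1/\bQ)(M_1)$, which still has density $>N^{-1}$ precisely because $M_1/\bQ$ is normal. Transporting by $\sigma_\ast$, the primes of $S_2':=\sigma_\ast(S_1')$ have the same residue characteristic and degree; since the rational prime below each such prime splits completely in the normal extension $M_1/\bQ$, a tensor-product computation shows every such prime splits completely in $M_1M_2/M_2$. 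Then $[M_1:M_1\cap M_2]^{-1}=[M_1M_2:M_2]^{-1}=\delta_{M_2}(\cs(M_1M_2/M_2))\geq\delta_{M_2}(S_2')=\delta_{M_1}(S_1')>N^{-1}$. If you want to salvage your approach, you would need to replace step (iii) by an argument of this kind, i.e.\ produce a positive-density set of $S_1$-primes whose complete splitting in the relevant compositum can be certified using only data preserved by $\sigma_\ast$ (residue characteristic and degree) together with normality over $\bQ$; the splitting ``synchronization'' cannot be extracted from $\sigma$ alone.
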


\begin{lm} \label{lm:cofinalvslim}
Let $\kappa$ be a field. If $(V_i)_{i \in I}$ is a cofiltered system of $\kappa$-vector spaces, such that $\dim_{\kappa} V_i < n$, and $V := \dirlim_I V_i$ , then $\dim_{\kappa} V < n$. 
\end{lm}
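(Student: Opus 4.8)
The plan is to reduce the statement to the elementary fact that in a directed (filtered) direct limit every finite family of elements of the limit can be lifted to a single term of the system. Recall that $V = \dirlim_I V_i$ is equipped with canonical $\kappa$-linear maps $\phi_i \colon V_i \to V$, that every element of $V$ lies in $\im(\phi_i)$ for some $i$, and that — because the index system is directed — any finitely many elements $v_1,\dots,v_m \in V$ lie in $\im(\phi_i)$ for one common index $i$ (choose indices $i_1,\dots,i_m$ with $v_k \in \im(\phi_{i_k})$, pick $i$ dominating all of them, and push the chosen preimages forward along the transition maps).

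First I would argue by contradiction. Suppose $\dim_{\kappa} V \geq n$ and choose $v_1,\dots,v_n \in V$ that are $\kappa$-linearly independent. By the observation above there is an index $i$ and elements $w_1,\dots,w_n \in V_i$ with $\phi_i(w_k) = v_k$. Then the $w_k$ must already be linearly independent in $V_i$: any relation $\sum_k c_k w_k = 0$ with $c_k \in \kappa$ is carried by $\phi_i$ to $\sum_k c_k v_k = 0$, so all $c_k$ vanish. Hence $\dim_{\kappa} V_i \geq n$, contradicting the hypothesis $\dim_{\kappa} V_i < n$; therefore $\dim_{\kappa} V < n$.

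Equivalently — and this is probably the cleanest way to write it down — one uses that $\dim_{\kappa} V = \sup\{\dim_{\kappa} W\}$ taken over all finite-dimensional subspaces $W \subseteq V$, that every such $W$ is contained in $\im(\phi_i)$ for a suitable $i$ (lift a basis of $W$ to a common term as above), and that $\dim_{\kappa}\im(\phi_i) \leq \dim_{\kappa} V_i < n$; so $\dim_{\kappa} W < n$ for every finite-dimensional $W \subseteq V$, whence $\dim_{\kappa} V < n$. There is essentially no obstacle here; the only point requiring care is the use of the directedness of the index system, which is exactly what allows the simultaneous lifting of finitely many elements (and without which the statement fails, e.g. for an infinite direct sum of lines). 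Since the direct systems to which the lemma is applied in the proof of Proposition \ref{lm:uniform_bound} are directed, this hypothesis is available.
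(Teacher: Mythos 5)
Your proof is correct and coincides in substance with the paper's own argument: both rest on lifting the $n$ given vectors of $V$ to a single term $V_i$ using directedness and then comparing linear (in)dependence under the canonical map $V_i \to V$. The paper phrases it as "dependent lifts give dependent images" while you take the contrapositive via independent vectors, but this is the same proof.
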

\begin{proof}[Proof of Lemma \ref{lm:cofinalvslim}]
For any $n$ vectors in $V$ there is an $i \in I$, such that these vectors has preimages in $V_i$. These preimages are linearly dependent. Hence their images in $V$ are linearly dependent.
\end{proof}

\begin{proof}[Proof of Proposition \ref{lm:uniform_bound}]
Since $S_1$ is stable, by \cite{IvStableSets} Proposition 2.6, there is some $N > 0$, such that $\delta_{L_1}(S_1) > N^{-1}$ for all finite subfields $K_{1,S_1}/L_1/K_1$. Let $M_1$ be a subextension of $K_{1, S_1}/K_1$, such that $M_1/\bQ$ is normal. By Lemma \ref{lm:cofinalvslim} and since $M_1$ is a union of finite extensions of $K_1$, which are normal over $\bQ$, we can assume that $M_1/K_1$ finite. Let 
\[S_1^{\prime} := S_1(M_1) \cap \cs(M_1/\bQ)(M_1). \] 

\noindent Since $M_1/\bQ$ is normal, $\delta_{M_1}(\cs(M_1/\bQ)(M_1)) = 1$ and hence

\[\delta_{M_1}(S_1^{\prime}) = \delta_{M_1}(S_1) > N^{-1}. \] 

\begin{lm}\label{lm:compdensity_for_uniform_bound} Let $S_2^{\prime} := \sigma_{\ast}(S_1^{\prime})$.  Then 
\begin{itemize}
%    \item[(i)] Each prime in $S_2^{\prime}$ has degree $1$ over $\bQ$.
 \item[(i)] $\delta_{M_2}(S_2^{\prime}) = \delta_{M_1}(S_1^{\prime})$.
\item[(ii)] $S_2^{\prime} \subsetsim \cs(M_1 M_2 / M_2)$.
\end{itemize}
\end{lm}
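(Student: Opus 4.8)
The plan is to use the local correspondence at the boundary to transport the density information from $M_1$ to $M_2$, and then to exploit the defining property of the set $\cs(M_1/\bQ)$ — that primes in it are totally split in $M_1/\bQ$, hence their residue characteristics and local degrees are as small as possible — to locate $S_2'$ inside the split set of $M_1M_2/M_2$. First I would reduce, as in the proof of the proposition, to the case where $M_1/K_1$ (and hence $M_2/K_2$) is finite, so that all the density statements make sense and $\sigma_{\ast,U_i}$ is an honest bijection of sets of primes of number fields for the open subgroups $U_i$ corresponding to $M_i$.

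For (i): the map $\sigma_{\ast, U_1}$ is a bijection $(S_{1,f}\sm S_1^{\rm ex})(M_1) \xrightarrow{\sim} (S_{2,f}\sm S_2^{\rm ex})(M_2)$ which, by the local correspondence at the boundary (condition (ii) of the definition, applied over the base $M_i$), preserves residue characteristic and local degree for all but finitely many primes. Since $S_1'$ differs from its image under this bijection only in finitely many primes and in primes whose local degree (equivalently: norm) is preserved, the two sets have the same Dirichlet density: density only depends on the primes of absolute degree one up to a set of density zero, and the bijection matches these up on both sides outside a finite set. Here one must also discard $S_i^{\rm ex}$ (finite, density zero) and the finitely many exceptional primes from condition (ii). So $\delta_{M_2}(S_2') = \delta_{M_1}(S_1') = \delta_{M_1}(S_1)$.

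For (ii): take $\fp_2 \in S_2'$, so $\fp_2 = \sigma_{\ast,U_1}(\fp_1)$ for some $\fp_1 \in S_1' = S_1(M_1)\cap \cs(M_1/\bQ)(M_1)$. Since $\fp_1 \in \cs(M_1/\bQ)(M_1)$, its underlying rational prime $\ell$ splits completely in $M_1/\bQ$; in particular $\fp_1$ has residue characteristic $\ell$ and local (absolute) degree $1$ over $\bQ$. By the local correspondence (outside a density-zero set, which we are allowed to ignore for a $\subsetsim$-statement) $\fp_2$ also has residue characteristic $\ell$ and absolute degree $1$, i.e.\ $\fp_2$ lies over $\ell$ and $M_{2,\fp_2} = \bQ_\ell$. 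Now I would argue that this forces $\fp_2$ to be totally split in $M_1M_2/M_2$: the completion $M_{2,\fp_2} = \bQ_\ell$ already contains the completion of $M_1$ at the place of $M_1$ induced by any extension of $\fp_2$ (namely $\bQ_\ell$ again, since $\ell$ splits completely in $M_1/\bQ$), so the local extension $(M_1M_2)_{\mathfrak P}/M_{2,\fp_2}$ is trivial for every $\mathfrak P \mid \fp_2$; hence $\fp_2$ splits completely in $M_1M_2/M_2$, i.e.\ $\fp_2 \in \cs(M_1M_2/M_2)(M_2)$. This holds for all $\fp_2 \in S_2'$ outside the density-zero exceptional set, which is exactly the assertion $S_2' \subsetsim \cs(M_1M_2/M_2)$.

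The main obstacle is the bookkeeping in (ii): one must be careful that "absolute degree one" of $\fp_2$ over $\bQ$ — which is the piece of information actually transported by the local correspondence — is genuinely enough to conclude that $\fp_2$ is split in the \emph{compositum} $M_1M_2$, and this is where the hypothesis that $M_1$ (and thus $M_1M_2$, being a compositum of two fields normal over $\bQ$) is normal over $\bQ$ is used: normality of $M_1/\bQ$ upgrades "$\ell$ has a degree-one prime in $M_1$" to "$\ell$ splits completely in $M_1$", so that $M_{1,\mathfrak q} = \bQ_\ell$ for \emph{every} $\mathfrak q \mid \ell$, which is what makes the local compositum trivial. I would also need to keep track of the finitely many primes excluded by conditions (i) and (ii) of the local correspondence and by $S_i^{\rm ex}$, but since all of these form sets of density zero they are harmless for both the density equality in (i) (finite sets) and the inclusion-up-to-density-zero in (ii).
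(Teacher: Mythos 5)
Your proposal is correct and follows essentially the paper's own argument: part (i) is the same explicit density computation using that $\sigma_{\ast}$ preserves residue characteristic and absolute degree (hence the norm) of all but finitely many primes of $S_1^{\prime}$, and for part (ii) the key point in both is that the rational prime $\ell$ under $\fp_1$ splits completely in $M_1/\bQ$ while $\sigma_{\ast}$ preserves the residue characteristic, which forces $\fp_2$ to split completely in $M_1M_2/M_2$. The only cosmetic difference is that you argue via completions ($(M_1M_2)_{\mathfrak{P}} = M_{1,\mathfrak{q}}\cdot M_{2,\fp_2} = M_{2,\fp_2}$ since $M_{1,\mathfrak{q}} = \bQ_{\ell}$), whereas the paper computes the fiber $\caO_{M_1M_2}\otimes_{\caO_{M_2}}\kappa(\fp_2)$ by base change over $M_1\cap M_2$; note also that the degree-one property of $\fp_2$ you transport is not actually needed for (ii), only the residue characteristic.
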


\begin{proof}[Proof of Lemma \ref{lm:compdensity_for_uniform_bound}]
% (i): this holds since it holds for $S_1^{\prime}$ and since $\sigma_{\ast}$ preserves the absolute degree of primes.

(i) follows from the local correspondence at the boundary by explicitly computing the density, since $\sigma_{\ast}$ preserves the residue characteristic and the absolute degree of almost all primes in $S_1^{\prime}$. 

(ii): Let $\fp_1 \in S_1^{\prime}$ be such that $\sigma_{\ast}$ preserves the residue characteristic and the absolute degree of $\fp_1$. Let $\fp_2 := \sigma_{\ast}(\fp_1) \in S_2^{\prime}$ and $\fp := \fp_2|_{M_1 \cap M_2}$. The fiber $\caO_{M_1 M_2} \otimes_{\caO_{M_2}} \kappa(\fp_2)$ over $\fp_2$ in $\Spec \caO_{M_1 M_2}$ is isomorphic to $(\caO_{M_1} \otimes_{\caO_{M_1 \cap M_2}} \kappa(\fp)) \otimes_{\kappa(\fp)} \kappa(\fp_2)$. By assumption, we have $\fp_2|_{\bQ} = \fp_1|_{\bQ} \in \cs(K_1/\bQ)$ and hence $\fp \in \cs(M_1/\bQ)(M_1 \cap M_2) \subseteq \cs(M_1/M_1 \cap M_2)$. This implies that $\caO_{M_1} \otimes_{\caO_{M_1 \cap M_2}} \kappa(\fp)$ is isomorphic to a product of copies of $\kappa(\fp)$. Thus we obtain 
\[\caO_{M_1M_2} \otimes_{\caO_{M_1}} \kappa(\fp_2) \cong \prod \kappa(\fp_2),\] 

\noindent i.e., $\fp_2$ is completely decomposed in $M_1M_2$. 
\end{proof}

Using Lemma \ref{lm:compdensity_for_uniform_bound} and the normality of $M_1 M_2 / M_2$, we obtain:
\begin{eqnarray*}
[M_1 : M_1 \cap M_2]^{-1} &=& [M_1 M_2 : M_2]^{-1} \\ 
&=& \delta_{M_2}( \cs(M_1 M_2 / M_2 )) \\
&\geq& \delta_{M_2}(S_2^{\prime}) \\
&=& \delta_{M_1}(S_1^{\prime}) \\
&>& N^{-1}.
\end{eqnarray*}
\noindent This proves Proposition \ref{lm:uniform_bound}.
\end{proof}

%************************************************************************************************************************************************************
%************************************************************************************************************************************************************

\subsection{Non-existence of lifts}\label{sec:nonex_of_lifts} 

Last but not least, Proposition \ref{prop:nonex_of_lifts} proven in this section provides the last argument which we need in the proof of Theorem \ref{thm:NU_anab_without_dec_for_stable}. Let $L/K$ be a Galois extension of global fields. We want to study, under which conditions there is no Galois extension $L_0/K_0$, such that $L/K$ is a base change of $L_0/K_0$, i.e., $K_0 = K \cap L_0$ and $L = K L_0$.

\begin{prop}\label{prop:nonex_of_lifts}
Let $K, L_0$ be two linearly disjoint Galois extensions of a global field $K_0$, and set $L = K L_0$. Assume one of the following holds:

\begin{itemize}
\item[(a)] \begin{itemize}
	   \item $K$ is a totally imaginary number field and 
	   \item $L = K_{S_p}(p)$ for some prime number $p$, or
           \end{itemize}
\item[(b)] There is a prime $\fp$ of $K_0$, which is completely split in $K$, such that for any $\bap_1,\bap_2 \in S_{\fp}(L)$ with $\bap_1|_K \neq \bap_2|_K$, we have $D_{\bap_1, L/K} \neq D_{\bap_2, L/K}$. %, or

% \item[(c)] There is a prime $\fp$ of $K_0$, which is unramified in $K$, such that for any $\bap_1,\bap_2 \in S_{\fp}(L)$ with $\bap_1|_K \neq \bap_2|_K$, the intersection $D_{\bap_1, L/K} \cap D_{\bap_2, L/K} \subseteq D_{\bap_1, L/K}$ is not open. 
\end{itemize}

\noindent Then $K = K_0$. 
\end{prop}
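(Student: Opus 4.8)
The plan is to use the linear disjointness to present $\Gal_{L/K_0}$ as an \emph{internal} direct product. Since $K_0=K\cap L_0$, the subgroup $\Gal_{L/L_0}\cong\Gal_{K/K_0}$ is a complement of $\Gal_{L/K}=\Gal_{L_0/K_0}$ inside $\Gal_{L/K_0}$, and since $L_0/K_0$ is Galois this complement is itself normal; hence $\Gal_{L/K_0}=G_0\times P$ with $G_0:=\Gal_{L/L_0}\cong\Gal_{K/K_0}$ and $P:=\Gal_{L/K}=\Gal_{L_0/K_0}$. I argue by contradiction, assuming $K\neq K_0$, i.e.\ $G_0\neq1$; the key leverage is that, the product being direct, the conjugation action of $G_0$ on $P$ is trivial.

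For part~(b) this immediately produces the contradiction. Fix the given prime $\fp$ of $K_0$, completely split in $K$, and a prime $\bap$ of $L$ above $\fp$. Under $\Gal_{L/K_0}\surj\Gal_{K/K_0}$ the group $D_{\bap,L/K_0}$ maps onto $D_{\bap|_K,K/K_0}=1$ (as $\fp$ splits completely in $K$), so $D_{\bap,L/K_0}\subseteq\Gal_{L/K}=P$ and therefore $D_{\bap,L/K}=D_{\bap,L/K_0}$. Now pick $g\in G_0\setminus\{1\}$ and put $\baq:=g\bap$. Then $\baq$ lies over $\fp$ as well, and $\baq|_K=g(\bap|_K)\neq\bap|_K$ because $g\notin D_{\bap|_K,K/K_0}=1$; while $g$ commutes with $P\supseteq D_{\bap,L/K_0}$, so $D_{\baq,L/K_0}=D_{\bap,L/K_0}$ and hence $D_{\baq,L/K}=D_{\bap,L/K}$. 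This contradicts~(b), so $K=K_0$.

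For part~(a) I would first reduce to the case $[K:K_0]=q$ prime, by replacing $K_0$ with the fixed field of a prime-order subgroup of $G_0$ and $L_0$ with its compositum with that field --- the direct product structure above shows all hypotheses survive. Since $L=K_{S_p}(p)$, every prime of $K$ not over $p$ is unramified in $L/K$, so for a prime $w$ of $K_0$ not over $p$ the inertia $I_{w,L/K_0}$ embeds into $G_0\cong\bZ/q$. If $q\neq p$ this inertia has order prime to $p$, hence trivial image in the pro-$p$ group $P$, i.e.\ $I_{w,L/K_0}\subseteq G_0\times1=\Gal_{L/L_0}$ (archimedean $w$ causing no trouble); thus $L_0/K_0$ is a pro-$p$ extension unramified outside $S_p(K_0)$, and $P=\Gal_{L_0/K_0}$ is a quotient of $\Gal_{(K_0)_{S_p}(p)/K_0}$. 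If $q=p$, then $\Gal_{L/K_0}=\bZ/p\times P$ is already pro-$p$ and $L/K_0$ is unramified outside $T:=S_p(K_0)\cup R\cup S_\infty$ with $R$ the finite ramification locus of $K/K_0$, so $\bZ/p\times P$ is a quotient of $\Gal_{(K_0)_T(p)/K_0}$. In either case, abelianizing and writing $A:=P^{\ab}=\Gal_{K_{S_p}^{\ab}(p)/K}$, one finds that $A$ is a quotient of the Galois group of the maximal abelian pro-$p$ extension of $K_0$ unramified outside a finite set containing $S_p(K_0)$.

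The contradiction then comes from a $\bZ_p$-rank count. Every $\bZ_p$-extension of a number field is unramified outside $p$, so the latter Galois group has $\bZ_p$-rank equal to the number of independent $\bZ_p$-extensions of $K_0$, namely $r_2(K_0)+1+\delta_{K_0}$ with $0\le\delta_{K_0}\le r_1(K_0)+r_2(K_0)-1$ the Leopoldt defect; in particular this rank is $\le r_1(K_0)+2r_2(K_0)=[K_0:\bQ]$. Being a quotient of it, $A$ has $\bZ_p$-rank $\le[K_0:\bQ]$; but $A=\Gal_{K_{S_p}^{\ab}(p)/K}$ has $\bZ_p$-rank $r_2(K)+1+\delta_K\ge r_2(K)+1$, and since $K$ is totally imaginary of degree $q\,[K_0:\bQ]$ one checks $r_2(K)\ge[K_0:\bQ]$ (equality exactly when $q=2$), so the $\bZ_p$-rank of $A$ is $\ge[K_0:\bQ]+1$ --- impossible. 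Hence $K=K_0$. The conceptual core of part~(a) is exactly this comparison: $K_{S_p}(p)$ cannot descend because $K$ carries strictly more independent $\bZ_p$-extensions than any proper subfield can supply. What I expect to be fussy (but not conceptually hard) is the reduction to prime degree while keeping the internal direct product intact, and the control of inertia at archimedean primes and at the primes ramifying in $K/K_0$; part~(b), by contrast, is a short purely group-theoretic argument once the direct product decomposition is available.
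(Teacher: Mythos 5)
Your argument is correct, and its core coincides with the paper's: for (b) you produce two primes of $L$ with the same restriction to $L_0$ but different restrictions to $K$ and equal decomposition groups in $L/K$ (the paper phrases this via the canonical isomorphism $\Gal_{L/K}\stackrel{\sim}{\to}\Gal_{L_0/K_0}$ rather than via the commuting factor $\Gal_{L/L_0}$, but it is the same construction), and for (a) the contradiction is the same count of independent $\bZ_p$-extensions: $r_2(K)+1>[K_0:\bQ]$ once $K$ is totally imaginary and $[K:K_0]\geq 2$. The one real difference is that your route to the upper bound in (a) is unnecessarily long: the reduction to prime degree $[K:K_0]=q$ and the inertia/ramification analysis (cases $q\neq p$, $q=p$, archimedean places) can be skipped entirely. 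Since $\Gal_{L_0/K_0}\cong\Gal_{K,S_p}(p)$, any $\bZ_p^s$-quotient of $\Gal_{L_0/K_0}^{\rm ab}$ already yields $s$ independent $\bZ_p$-extensions of $K_0$ — no matter where $L_0/K_0$ ramifies, because every $\bZ_p$-extension of $K_0$ lies in the compositum $\widetilde{K_0}$, whose Galois group has $\bZ_p$-rank $\leq [K_0:\bQ]$ (\cite{NSW} 10.3.20). This is exactly how the paper argues, so your ramification bookkeeping buys nothing beyond what the invariant ``number of independent $\bZ_p$-extensions of $K_0$'' gives for free; otherwise the two proofs are essentially identical, and neither needs Leopoldt's conjecture.
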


We will only use part (a) of this proposition.

\begin{proof}
Assume (a) holds. Then $L/K$ and $L_0/K_0$ are both Galois with Galois group isomorphic to $\Gal_{K,S_p}(p)$. By \cite{NSW} 10.3.20, the number of independent $\bZ_p$-extensions of $K$ satisfies  
\[ \rank_{\bZ_p} \Gal_{K,S_p}^{\rm ab}(p) \geq r_2(K) + 1. \] 

\noindent Since $\Gal_{L/K} \cong \Gal_{L_0/K_0}$, the field $K_0$ has at least $r_2(K) + 1$ independent $\bZ_p$-extensions. Assume $K \neq K_0$. Then $[K : K_0] \geq 2$, and since $K$ is totally imaginary, we obtain:
\[ r_2(K) + 1 =  \frac{[K : \bQ]}{2} + 1 \geq [K_0 : \bQ ] + 1 > [K_0 : \bQ]. \]
But by \cite{NSW} 10.3.20, the number of independent $\bZ_p$-extensions of $K_0$ is $\leq [K_0 : \bQ]$. This is a contradiction, hence $K = K_0$ (notice that we nowhere made use of Leopoldt's conjecture!).

Assume (b) holds. Let $\psi \colon \Gal_{L/K} \stackrel{\sim}{\rar} \Gal_{L_0/K_0}$ denote the canonical isomorphism. Assume there are two different primes $\fp_1 \neq \fp_2$ in $K$ over $\fp$. Let $\fq$ be some prime of $L_0$ over $\fp$. One can chose primes $\bap_i \in S_{\fp}(L)$, such that $\bap_i|_K = \fp_i$ and $\bap_i|_{L_0} = \fq$. As $\fp_1,\fp_2$ are split over $K_0$, we obtain that $\psi$ maps $D_{\bap_i, L/K}$ isomorphically to $D_{\fq, L_0/K_0}$. But by assumption $D_{\bap_1, L/K} \neq D_{\bap_2, L/K}$, hence $D_{\fq, L_0/K_0} = \psi(D_{\bap_1, L/K}) \neq \psi(D_{\bap_2,L/K}) = D_{\fq,L_0/K_0}$, which is a contradiction. Thus there is only one prime over $\fp$ in $K$, and since $\fp$ is completely split, we obtain $[K:K_0] = 1$. 
\end{proof}
% Assume (c) holds. Let again $\psi \colon \Gal_{L/K} \rar \Gal_{L_0/K_0}$ be the canonical isomorphism. By the same argument as in the proof of (b), we obtain that there is only one prime in $S_{\fp}(K)$. Denote it by $\fp^{\prime}$. Assume $[\kappa(\fp^{\prime}):\kappa(\fp)] > 1$. 

%************************************************************************************************************************************************************
%************************************************************************************************************************************************************

\subsection{Proof of Theorem \ref{thm:NU_anab_without_dec_for_stable}} \label{sec:proofofanabthms}

By assumption (b) in Theorem \ref{thm:NU_anab_without_dec_for_stable} and Corollary \ref{cor:loccorforstableingeneral} the local correspondence at the boundary holds for $\sigma$: for any open subgroup $U_1 \subseteq \Gal_{K_1,S_1}$ with fixed field $L_1$, $\sigma$ induces a functorial bijection

\[ \sigma^{\ast}_{U_1} \colon (S_{1,f} \sm S_{2-adic})(L_1) \stackrel{\sim}{\rar} (S_{2,f} \sm S_{2-adic})(L_2), \]

\noindent which preserves the residue characteristic and the absolute degree of all primes in \\ $(S_{1,f} \sm (S_{2-adic} \cup T))(L_1)$, with $T := \{\ell \colon S_1 \text{ or } S_2 \text{ is not } (\dagger)_{\ell} \}$. We obtain $[K_2 : \bQ] \leq [K_1 : \bQ]$ from this. Indeed, by assumption (d) there is an odd prime $p$ with $S_p \subseteq S_2$ and $p \not\in T$, and hence $\sigma_{\ast}$ preserves the residue characteristic and the absolute degree of primes in $S_p(K_1) \cap S_1$ by Corollary \ref{cor:loccorforstableingeneral}. Hence $\sigma_{\ast}(S_1 \cap S_p(K_1)) = S_p(K_2)$ and

\begin{equation} \label{eq:eq_of_degs} 
\begin{split} 
[K_2:\bQ] &= \dim_{\bQ_p} K_2 \otimes_{\bQ} \bQ_p = \sum_{\fp \in S_p(K_2)} [K_{2,\fp}:\bQ_p] = \sum_{\fp \in (S_p \cap S_1)(K_1)} [K_{1,\sigma_{\ast,K_1}^{-1}(\fp)}:\bQ_p] \\ &\leq \sum_{\fp \in S_p(K_1)} [K_{1,\sigma_{\ast,K_1}^{-1}(\fp)}:\bQ_p] = [K_1 : \bQ]. 
\end{split}
\end{equation}

% By symmetry (only here we use the assumption that there is a rational prime under $S_2$), $[L_1:\bQ] = [L_2:\bQ]$ for any finite $K_{1,S_1}/L_1/K_1$. 

By (c) we have two rational  primes $p_1, p_2$, such that $S_{p_j} \subseteq S_1$, $p_j > 2$. Let $p \in \{p_1, p_2\}$. The quotient $\Gal_{K_1, S_p}(p)$ of $\Gal_{K_1, S_1}$ is torsion-free (cf. \cite{NSW} 8.3.18 and 10.4.8). Since $K_1$ is normal over $\bQ$, $S_p$ is defined over $\bQ$ and the maximal pro-$p$-quotient of a profinite group is characteristic, we deduce that the field $K_{1, S_p}(p)$ is normal over $\bQ$. Let $L_{2,p}$ be the field corresponding to $K_{1,S_p}(p)$ via $\sigma$ (a priori, $L_{2,p}$ must not be equal $K_{2,S_p}(p)$). We have the following situation:

\centerline{
\begin{xy}
\xymatrix{
K_{1,S_p}(p) \ar@{-}[d]^{H_1} \ar@{-}[rdd] & & L_{2,p} \ar@{-}[ldd] \ar@{-}[ddd] \\
K_1.(K_{1,S_p}(p) \cap L_{2,p}) \ar@{-}[rd] \ar@{-}[dd]^{H_2} & &  \\
& K_{1,S_p}(p) \cap L_{2,p} \ar@{-}[dd]^{H_2}&  \\
K_1 \ar@{-}[rd] \ar@{-}[rdd] & & K_2 \ar@{-}[ldd] \\
& K_1 \cap L_{2,p} \ar@{-}[d] &  \\
& K_1 \cap K_2 &  \\
}
\end{xy}
}

\begin{comment} The diagram in the case that $\sigma$ preserves inertia  subgroups:
\centerline{
\begin{xy}
\xymatrix{
K_{1,S_p}(p) \ar@{-}[d]^{H_1} \ar@{-}[rdd] & & K_{2,S_p}(p) \ar@{-}[ldd] \ar@{-}[ddd] \\
K_1.(K_{1,S_p}(p) \cap K_{2,S_p}(p)) \ar@{-}[rd] \ar@{-}[dd]^{H_2} & &  \\
& K_{1,S_p}(p) \cap K_{2,S_p}(p) \ar@{-}[dd]^{H_2}&  \\
K_1 \ar@{-}[rd] \ar@{-}[rdd] & & K_2 \ar@{-}[ldd] \\
& K_1 \cap K_{2,T_2}(p) \ar@{-}[d] &  \\
& K_1 \cap K_2 &  \\
}
\end{xy}
}
\end{comment}

\noindent In this diagram the group $H_1$ is a subgroup of $\Gal_{K_{1,S_p}(p)/K_{1,S_p}(p) \cap L_{2,p}}$ and of $\Gal_{K_1,S_p}(p)$. But the first of these two groups is finite by Proposition \ref{lm:uniform_bound} and the second is torsion-free. Hence $H_1 = 1$, i.e., $H_2 = \Gal_{K_1,S_p}(p)$. By Proposition \ref{prop:nonex_of_lifts}(a) we get $K_1 = K_1 \cap L_{2,p}$, i.e., $K_1 \subseteq L_{2,p}$. Doing this for $p = p_1, p_2$, we get: $K_1 \subseteq L_{2,p_1} \cap L_{2, p_2} = K_2$, the last equality being true, since $L_{2, p_j}/K_2$ is a pro-$p_j$-extension for $j = 1,2$. By \eqref{eq:eq_of_degs} we conclude that $K_1 = K_2$. \qed

\end{document}